\newcommand{\Table}[1]{\mbox{Table~\ref{#1}}}
\newcommand{\Fig}[1]{\mbox{Fig.~\ref{#1}}}
\newcommand{\Figs}[2]{\mbox{Figs.~\ref{#1}} and \ref{#2}}
\newcommand{\Prop}[1]{\mbox{Proposition~\ref{#1}}}
\newcommand{\Props}[2]{\mbox{Propositions~\ref{#1}} and \ref{#2}}
\newcommand{\Lem}[1]{\mbox{Lemma~\ref{#1}}}
\newcommand{\Lems}[2]{\mbox{Lemmas~\ref{#1}} and \ref{#2}}
\newcommand{\Sec}[1]{\mbox{Section~\ref{#1}}}
\newcommand{\Secs}[2]{\mbox{Sections~\ref{#1}} and \ref{#2}}
\newcommand{\Eq}[1]{\mbox{Eq.~(\ref{#1})}}
\newcommand{\Eqs}[2]{\mbox{Eqs.~(\ref{#1})} and (\ref{#2})}
\newcommand{\Eqss}[3]{\mbox{Eqs.~(\ref{#1})}, (\ref{#2}), and (\ref{#3})}
\newcommand{\bsym}[1]{\boldsymbol{#1}}
\newcommand{\mat}[1]{\ensuremath{\mathsf{#1}}}
\newcommand{\Idty}[0]{\mat{I}} 
\definecolor{Red}{RGB}{235,40,25}
\definecolor{Green}{rgb}{0,1,0}
\newcommand\p[2]{\frac{\partial #1}{\partial #2}}
\newcommand\pp[3]{\frac{\partial^2 #1}{\partial #2 \partial #3}}
\DeclareMathOperator{\mydiag}{diag}
\newcommand{\trace}[1]{\text{tr} \left( #1 \right) }
\newcommand{\argmax}[0]{\ensuremath{\operatornamewithlimits{argmax}}}
\newcommand{\eigmin}[0]{\ensuremath{\lambda_{\text{min}}}}
\newcommand{\eigmax}[0]{\ensuremath{\lambda_{\text{max}}}}
\newcommand{\ie}[0]{i.e.\ }
\newcommand{\cmark}[0]{\textcolor{green}{\ding{51}} }%
\newcommand{\xmark}[0]{\textcolor{red}{\ding{55}} }%
\newcommand{\ignore}[1]{} 
\newcommand\yesnumber{\addtocounter{equation}{1}\tag{\theequation}}
\newcommand{\RNum}[1]{\uppercase\expandafter{\romannumeral #1\relax}}
\newcommand{\zero}[0]{\ensuremath{\textbf{0}}}
\newcommand{\one}[0]{\ensuremath{\textbf{1}}}
\newcommand{\fvec}[0]{\ensuremath{\bsym{f}}}
\newcommand{\kvec}[0]{\ensuremath{\bsym{k}}}
\newcommand{\rvec}[0]{\ensuremath{\bsym{r}}}
\newcommand{\wvec}[0]{\ensuremath{\bsym{w}}}
\newcommand{\xvec}[0]{\ensuremath{\bsym{x}}}
\newcommand{\yvec}[0]{\ensuremath{\bsym{y}}}
\newcommand{\thetavec}[0]{\ensuremath{\bsym{\theta}}}
\newcommand{\gammavec}[0]{\bsym{\gamma}}
\newcommand{\A}[0]{\mat{A}}
\newcommand{\K}[0]{\mat{K}}
\newcommand{\R}[0]{\mat{R}}
\newcommand{\X}[0]{\mat{X}}
\newcommand{\Lmat}[0]{\mat{L}}
\newcommand{\Ltil}[0]{\tilde{\mat{L}}}
\newcommand{\Rtil}[0]{\tilde{\mat{R}}}
\newcommand{\fgrad}{\ensuremath{\bsym{f}_{\nabla}}}
\newcommand{\etaK}{\ensuremath{\eta_{\K}}}
\newcommand{\etaKg}{\ensuremath{\eta_{\Kg}}}
\newcommand{\etaKgtil}{\ensuremath{\eta}_{\Kgtil}}
\newcommand{\kvecgrad}{\ensuremath{\bsym{k}_\nabla}}
\newcommand{\Kg}[0]{\ensuremath{\mat{K}_{\nabla}}}
\newcommand{\condmax}[0]{\kappa_{\max}}
\newcommand{\bSigmagtil}{\ensuremath{\kappa(\Sigma(\gammavec)) < \condmax \, \forall \, \gammavec > 0}}
\newcommand\vmin{\ensuremath{v_{\text{min}}}}
\newcommand\vreq{\ensuremath{v_{\text{req}}}}
\newcommand\vminset{\ensuremath{v_{\text{min,set}}}}
\newcommand{\Pmat}[0]{\mat{P}}
\newcommand{\Pinv}[0]{\mat{P}^{-1}}
\newcommand{\rtil}[0]{\ensuremath{\tilde{r}}}
\newcommand{\xtil}[0]{\ensuremath{\tilde{x}}}
\newcommand{\ytil}[0]{\ensuremath{\tilde{y}}}
\newcommand{\rvectil}[0]{\ensuremath{\tilde{\rvec}}}
\newcommand{\xvectil}[0]{\ensuremath{\tilde{\xvec}}}
\newcommand{\Kgtil}[0]{\ensuremath{\tilde{\mat{K}}_{\nabla}}}
\newcommand{\Sigmag}[0]{\ensuremath{\Sigma_{\nabla}}}
\newcommand{\Sigmagtil}[0]{\ensuremath{\tilde{\Sigma}_{\nabla}}}
\newcommand{\onemod}[0]{\ensuremath{\hat{\one}}}
\newcommand{\onebar}[0]{\ensuremath{\bar{\one}}}
\newcommand{\sigK}{\ensuremath{\sigma_{\K}}}
\begin{document}

\title{A Solution to the Ill-Conditioning of Gradient-Enhanced Covariance Matrices for Gaussian Processes 
}

\titlerunning{A solution for ill-conditioned covariance matrices} 

\author{Andr\'e L.\ Marchildon        
		 \and David W. Zingg 
}


\institute{Andr\'e L.\ Marchildon \at
              University of Toronto Institute for Aerospace Studies, Toronto, ON, Canada \\
              \email{andre.marchildon@mail.utoronto.ca}           
           \and
           David W.\ Zingg \at
           University of Toronto Institute for Aerospace Studies, Toronto, ON, Canada
}

\date{Received: date / Accepted: date}

\maketitle

\begin{abstract}


Gaussian processes provide probabilistic surrogates for various applications including classification, uncertainty quantification, and optimization. Using a gradient-enhanced covariance matrix can be beneficial since it provides a more accurate surrogate relative to its gradient-free counterpart. An acute problem for Gaussian processes, particularly those that use gradients, is the ill-conditioning of their covariance matrices. Several methods have been developed to address this problem for gradient-enhanced Gaussian processes but they have various drawbacks such as limiting the data that can be used, imposing a minimum distance between evaluation points in the parameter space, or constraining the hyperparameters. In this paper a new method is presented that applies a diagonal preconditioner to the covariance matrix along with a modest nugget to ensure that the condition number of the covariance matrix is bounded, while avoiding the drawbacks listed above. Optimization results for a gradient-enhanced Bayesian optimizer with the Gaussian kernel are compared with the use of the new method, a baseline method that constrains the hyperparameters, and a rescaling method that increases the distance between evaluation points. The Bayesian optimizer with the new method converges the optimality, \ie the $\ell_2$ norm of the gradient, an additional 5 to 9 orders of magnitude relative to when the baseline method is used and it does so in fewer iterations than with the rescaling method. The new method is available in the open source python library GpGradPy, which can be found at \url{https://github.com/marchildon/gpgradpy/tree/paper_precon}. All of the figures in this paper can be reproduced with this library.

\keywords{Gaussian process \and Covariance matrix \and Condition number \and Bayesian optimization}


\subclass{15A12, 
			\and 60G15, 
			\and 65K99 
			}


\end{abstract}

\section{Introduction} \label{Sec_Intro}

In diverse fields and for various applications, such as uncertainty quantification, classification, and optimization, an expensive function of interest must be repeatedly evaluated \cite{zingg_comparative_2008,shahriari_taking_2016,schulz_tutorial_2018}. To minimize the computational cost it is desirable to minimize the number of expensive function evaluations. One way to achieve this is by constructing a surrogate that approximates the function of interest and is inexpensive to evaluate. Various methods to construct surrogates can be utilized, including using a Gaussian process (GP). This method provides a nonparametric probabilistic surrogate. The nonparametric component of the GP indicates that it does not depend on a parametric functional form, unlike a polynomial surrogate where the order of the basis function must be selected a priori. As for the probabilistic component of the GP, this enables the surrogate to provide an estimate for the function of interest and to quantify the uncertainty in its estimate \cite{rasmussen_gaussian_2006}.
A Gaussian process requires a mean and a covariance function \cite{ababou_condition_1994}. 
A constant is often used for the former and its value is set by maximizing the marginal log-likelihood \cite{toal_kriging_2008,toal_adjoint_2009,toal_development_2011,ollar_gradient_2017}. For the covariance function, it is popular to use kernels, of which many are available \cite{davis_six_1997,rasmussen_gaussian_2006}. The most popular kernel is the Gaussian kernel, which is also known as the squared exponential kernel \cite{rasmussen_gaussian_2006,shahriari_taking_2016,wu_exploiting_2018}. The desirable properties of this kernel include its hyperparameters that can be tuned, its simplicity, and its smoothness. This final property enables the surrogate to be constructed using gradient evaluations, which makes the surrogate more accurate \cite{dalbey_efficient_2013,eriksson_scaling_2018,wu_exploiting_2018}. 

Gradient-enhanced GPs use both the value and gradient of the function of interest to construct the probabilistic surrogate. By using gradients with the GP, a more accurate surrogate is constructed that matches both the value and gradient of the function of interest where it has been evaluated in the parameter space \cite{osborne_gaussian_2009,ulaganathan_performance_2016,wu_bayesian_2017}. This is particularly useful in high-dimensional parameter spaces since a single gradient evaluation provides much more information than a single function evaluation. The gradient-enhanced covariance matrix can be constructed either with the direct method or the indirect method \cite{zimmermann_maximum_2013}. The former modifies the structure of the gradient-free covariance matrix while the latter does not. The direct method is much more common \cite{han_improving_2013,dalbey_efficient_2013,wu_exploiting_2018,laurent_overview_2019} and is used in this paper. A drawback of using gradient-enhanced GPs is that the covariance matrix is larger than its gradient-free counterpart and is thus more expensive to invert. Various strategies have been developed to mitigate this additional cost by using random Fourier features \cite{hung_random_2021}, or by exploiting the structure of the gradient-enhanced covariance matrix \cite{de_roos_high-dimensional_2021}.

A ubiquitous problem in the use of GPs is the ill-conditioning of their covariance matrices \cite{ababou_condition_1994,kostinski_condition_2000,zimmermann_condition_2015}. This problem is present with the use of many kernels, including the Gaussian kernel. Various factors have been identified that exacerbate the ill-conditioning, such as having the data points too close together \cite{davis_six_1997}. The ill-conditioning of the covariance matrix is problematic since it can cause the Cholesky factorization to fail \cite{higham_cholesky_2009}, and it also increases the numerical error. 
Adding a small positive nugget to the diagonal of the gradient-free covariance matrix is sufficient to ensure that the condition number of the matrix is below a user-set threshold \cite{mohammadi_analytic_2017}. 

The ill-conditioning of the gradient-enhanced covariance matrix is even more acute than the gradient-free case, and the addition of a nugget is insufficient on its own to alleviate this problem \cite{he_instability_2018,dalbey_efficient_2013}. Various approaches have been attempted to mitigate the ill-conditioning problem, such as removing certain data points until the condition number is sufficiently low \cite{march_gradient-based_2011,dalbey_efficient_2013}, or imposing a minimum distance constraint between data points in the parameter space \cite{osborne_gaussian_2009}. Both methods have significant drawbacks since they restrict the data available to construct the surrogate. Furthermore, neither method guarantees that the condition number of the covariance matrix remains below a user-set threshold as the hyperparameters are optimized. There is one recent method that does ensure that the condition number of the gradient-enhanced covariance matrix remains below a user-set threshold when the Gaussian kernel is used \cite{marchildon_non-intrusive_2023}. This method uses non-isotropic rescaling of the data in order to have a set minimum distance between the data points. While data points cannot be collocated, they can get arbitrarily close in the parameter space, and the method allows all of the data points to be kept in the construction of the gradient-enhanced covariance matrix. However, the drawback of this method is that, in some cases, the rescaling needs to be done iteratively, which requires the hyperparameters to be optimized again. This adds additional complexity and computational cost.

The new method presented in this paper shares the same benefits as the rescaling method from \cite{marchildon_non-intrusive_2023}, \ie all of the data points can be used, there is no minimum distance constraint between the data points in the parameter space, and the condition number of the gradient-enhanced covariance matrix is bounded. The new method also has two additional benefits: it only requires a single optimization of the hyperparameters, \ie it is not iterative, and there is no need for a constraint on the condition number for the optimization of the hyperparameters. This simplifies the implementation of the new method and reduces its computational cost. 

The new and rescaling methods are available in the open source python library GpGradPy, which can be accessed at \url{https://github.com/marchildon/gpgradpy/tree/paper_precon}. This library contains the Gaussian, Mat\'ern $\frac{5}{2}$, and rational quadratic kernels.

The notation used in this paper is presented in \Sec{Sec_Notation}. In \Sec{Sec_GP} the GP is presented along with the Gaussian kernel and the covariance matrix. A modified covariance matrix is derived in \Sec{Sec_GpNew}. In \Sec{Sec_WellCondMtd} it is demonstrated how the condition number of the modified covariance matrix can be bounded with the use of a nugget. Details on the implementation of the new covariance matrix can be found in \Sec{Sec_Implementation} and optimization results are provided in \Sec{Sec_Results}. Finally, the conclusions of the paper are presented in \Sec{Sec_Conclusions}. 

\section{Notation} \label{Sec_Notation}

Sans-serif capital letters are used for matrices. For example, $\Idty$ is the identity matrix, and $\X$ is an $n_x \times d$ matrix that holds the location of $n_x$ evaluation points in a $d$ dimensional parameter space. Vectors are denoted in lowercase bold font. For instance, $\xvec$ and $\yvec$ are vectors of length $d$ denoting arbitrary points in the parameter space. The $i$-th row of $\X$ is denoted as $\xvec_{i:}$ and its $j$-th column is indicated as $\xvec_{:j}$. Finally, scalars are denoted in lowercase letters such as $x_{ij}$, which is the entry at the $i$-th row and $j$-th column of $\X$. The symbols $\zero_d$ and $\one_d$ are vectors of length $d$ with all of their entries equal to zero and one, respectively. Variations of these symbols such as $\bar{\one}$ or $\onemod$ are used to indicate a matrix of ones or a vector where some of its entries are zero, respectively. These will be clarified when they appear in the paper.

\section{Gaussian process} \label{Sec_GP}

\subsection{Gradient-free covariance matrix} \label{Sec_GP_woGrad}

To fully define a GP we require a mean function and a covariance function. The mean function is selected here to be the constant $\beta$, which is a hyperparameter that is selected by maximizing the marginal log-likelihood function that will be presented in \Sec{Sec_GP_Eval}. For the covariance function we use the popular Gaussian kernel
\begin{align*}
	k(\xvec, \yvec; \gammavec) 
		&= e^{-\frac{1}{2}\sum_{i=1}^d \gamma_i^2 (x_i - y_i)^2}, \yesnumber \label{Eq_Gaussian_kernel}
\end{align*}
where $\gamma_i > 0 \, \forall \, i \in \{1, \ldots, d \}$ are hyperparameters. The Gaussian kernel is typically presented with $\thetavec = \gammavec^2 / 2$ as its hyperparameters but it is simpler in the later derivations to use $\gammavec$ instead. The Gaussian kernel is a stationary kernel since it depends only on $\rvec = \xvec - \yvec$, \ie the relative location of $\yvec$ to $\xvec$. The gradient-free Gaussian kernel matrix is
\begin{equation} \label{Eq_matrix_K}
	\K = \K(\X; \gammavec) =
	\begin{bmatrix}
		1 	& k(\xvec_{1:}, \xvec_{2:},; \gammavec) 		& \ldots 	& k(\xvec_{1:}, \xvec_{n_x:}; \gammavec) \\
		k(\xvec_{2:}, \xvec_{1:}; \gammavec) 	& 1 		& \ldots 	& k(\xvec_{2:}, \xvec_{n_x:}; \gammavec) \\
		\vdots 	&	\vdots						& \ddots 	& \vdots \\
		k(\xvec_{n_x:}, \xvec_{1:}; \gammavec) 	& k(\xvec_{n_x:}, \xvec_{2:}; \gammavec) 	& \ldots 	& 1,
	\end{bmatrix},
\end{equation}
where its diagonal entries are all unity. In general, the $i$-th diagonal entry of $\K$ is $k(\xvec_{i:}, \xvec_{i:}; \gammavec)$. The gradient-free Gaussian kernel matrix $\K$ is also a correlation matrix since it satisfies all of the properties of the following definition.

\begin{definition} \label{Def_corr_matrix}
	A correlation matrix must satisfy all of the following conditions:
	\begin{enumerate}
		\item All of the entries in the square matrix are real and between $-1$ and $1$
		\item The diagonal entries of the matrix are all unity
		\item The matrix is positive semidefinite
	\end{enumerate}
\end{definition}

The noise-free regularized gradient-free covariance matrix is given by 
\begin{equation}
	\Sigma(\X; \gammavec, \etaK) 	= \sigK^2 \left( \K(\X; \gammavec) + \etaK \Idty \right),
\end{equation}
where the hyperparameter $\sigK^2$ is the variance of the stationary residual error, and the nugget $\etaK$ is used to have $\kappa(\Sigma) \leq \condmax$, where $\condmax > 1$ is the maximum allowed condition number and is set by the user. The nugget is discussed further in \Sec{Sec_WellCondMtd_Nugget}.

\subsection{Gradient-enhanced covariance matrix} \label{Sec_GP_wGrad}

Constructing the gradient-enhanced kernel matrix requires the derivatives of the kernel with respect to its inputs:
\begin{align}	
	\p{k(\xvec, \yvec)}{x_i}
		&= -\gamma_i^2 \left(x_i - y_i \right) k(\xvec, \yvec) \label{Eq_Kg_entry_grad_w_obj} \\
	\p{k(\xvec, \yvec)}{y_j} 
		&= \gamma_j^2 \left(x_j - y_j \right) k(\xvec, \yvec) \label{Eq_Kg_entry_obj_w_grad} \\
	\p{^2 k(\xvec, \yvec)}{x_i \partial y_j} 
		&= \left( \delta_{ij} \gamma_i^2 - \gamma_i^2 \gamma_j^2 \left(x_i - y_i \right) \left(x_j - y_j \right) \right) k(\xvec, \yvec), \label{Eq_Kg_entry_grad_w_grad}
\end{align}
where $\delta_{ij}$ is the Kronecker delta. The gradient-enhanced kernel matrix is given by
\begin{align} 
\Kg(\X; \gammavec) &= 
\begin{bmatrix}
	\K 				& \p{\K}{y_1} 				& \ldots 	& \p{\K}{y_d} \\
	\p{\K}{x_1} 	& \pp{\K}{x_1}{y_1} 	& \ldots 	& \pp{\K}{x_1}{y_d} \\ 
	\vdots 			& 	\vdots					& \ddots 	& \vdots \\
	\p{\K}{x_d} 	& \pp{\K}{x_d}{y_1} 	& \ldots 	& \pp{\K}{x_d}{y_d}
\end{bmatrix} \label{Eq_Kg_form} \\
&=
\begin{bmatrix}
	\K & \gamma_1^2 \R_1 \odot \K & \ldots & \gamma_d^2 \R_d \odot \K \\ 
	-\gamma_1^2 \R_1 \odot \K & \left( \gamma_1^2 \onebar - \gamma_1^4 \R_1^{(2)} \right) \odot \K & \ldots & -\gamma_1^2 \gamma_d^2 \R_1 \odot \R_d \odot \K \\
	\vdots & \vdots & \ddots & \vdots	\\
	-\gamma_d^2 \R_d \odot \K & - \gamma_1^2 \gamma_d^2 \R_1 \odot \R_d \odot \K & \ldots & \left( \gamma_d^2 \onebar - \gamma_d^4 \R_d^{(2)} \right) \odot \K \\
\end{bmatrix}, \label{Eq_Kg}
\end{align}
where $\onebar$ is a matrix of ones of size $n_x \times n_x$, 
the operator $\odot$ is the Hadamard product for elementwise multiplication, and $\R_i$ is a skew-symmetric matrix given by
\begin{align}
	\R_i(\X) 
		&= \xvec_{:i} \one_{n_x}^\top - \one_{n_x} \xvec_{:i}^\top \nonumber \\
		&=
	\begin{bmatrix}
		0 & x_{1i} - x_{2i} & \ldots & x_{1i} - x_{n_xi} \\
		x_{2i} - x_{1i} & 0 & \ldots & x_{2i} - x_{n_xi} \\
		\vdots & \vdots & \ddots & \vdots \\
		x_{n_xi} - x_{1i} & x_{n_xi} - x_{2i} & \ldots & 0
	\end{bmatrix}. \label{Eq_matR}
\end{align}
Unlike $\K$, $\Kg$ is not a correlation matrix since it does not satisfy the first and second conditions in Definition \ref{Def_corr_matrix}. This is clear from checking the diagonal of $\Kg$:
\begin{equation} \label{Eq_diag_Kg}
	\mydiag(\Kg) = [\underbrace{1, \ldots, 1}_{n_x}, \underbrace{\gamma_1^2, \ldots \gamma_1^2}_{n_x}, \ldots, \underbrace{\gamma_d^2, \ldots \gamma_d^2}_{n_x}].
\end{equation} 
Definition \ref{Def_corr_matrix} would only be satisfied for $\Kg$ if $\gamma_1 = \ldots = \gamma_d = 1$. However, the hyperparameters $\gammavec$ are set by maximizing the marginal log-likelihood, which is introduced in the following subsection. The noise-free regularized gradient-enhanced covariance matrix is
\begin{equation} \label{Eq_grad_cov_mat}
	\Sigmag(\gammavec; \etaKg) = \sigK^2 \left( \Kg(\gammavec) + \etaKg \Idty \right),
\end{equation}
where the nugget $\etaKg$ is a regularization term that is used to ensure that $\kappa(\Sigmag) \leq \condmax$, as detailed in \Sec{Sec_WellCondMtd}.

\subsection{Evaluating the Gaussian process} \label{Sec_GP_Eval}

The mean and variance of the gradient-enhanced Gaussian process are evaluated with \cite{wu_bayesian_2017}
\begin{align}
	\mu_f(\xvec) 
		&= \beta + \kvecgrad^\top (\xvec) \left( \Kg + \etaKg \Idty \right)^{-1} \left(\fgrad - \beta \onemod \right) \label{Eq_surr_mu} \\
	\sigma_f^2(\xvec) 
		&= \sigK^2 \left( k(\xvec, \xvec) - \kvecgrad^\top (\xvec) \left( \Kg + \etaKg \Idty \right)^{-1} \kvecgrad(\xvec) \right), \label{Eq_surr_sig} 
\end{align}
where $\onemod = [\one_{n_x}^\top, \zero_{n_x d}^\top]^\top$, and
%
\begin{align} 
	\kvecgrad(\xvec; \X) =
	\begin{bmatrix}
		\kvec(\X, \xvec) \\
		\p{\kvec(\X, \xvec)}{x_1} \\
		\vdots \\
		\p{\kvec(\X, \xvec)}{x_d}
	\end{bmatrix}, \quad
	\fgrad(\X) = 
	\begin{bmatrix}
		\fvec(\X) \\
		\p{\fvec(\X)}{x_1} \\
		\vdots \\
		\p{\fvec(\X)}{x_d}
	\end{bmatrix}, \label{Eq_def_kvecgrad_fgrad}
\end{align}
where $\fvec(\X)$ is the function of interest evaluated at all of the rows in $\X$. In this paper the gradient of the function of interest is calculated analytically, but it could also be calculated with algorithmic differentiation or approximated with finite differences.

Prior to evaluating the GP, its hyperparameters must first be selected, which is commonly done by maximizing the marginal likelihood
\cite{toal_kriging_2008,toal_adjoint_2009,toal_development_2011,ollar_gradient_2017}
\begin{align*} 
	L(\gammavec, \beta, \sigK^2; \X, \fgrad, \etaKg) 
		&= \frac{e^{-\frac{ \left(\fgrad - \beta \hat{\one} \right)^\top \Sigmag^{-1} \left( \fgrad - \beta \hat{\one} \right) }{2}} }{ \left(2 \pi \right)^{\frac{n_x(d+1)}{2}} \sqrt{\det \left(\Sigmag \right) }} \\
		&= \frac{e^{-\frac{ \left(\fgrad - \beta \hat{\one} \right)^\top \left( \Kg + \etaKg \Idty \right)^{-1} \left(\fgrad - \beta \hat{\one} \right) }{ 2 \sigK^2 }} }{ \left(2 \pi \sigK^2 \right)^{\frac{n_x(d+1)}{2}} \sqrt{ \det \left( \Kg + \etaKg \Idty \right) }}. 
\end{align*}
For the noise-free case being considered it is straightforward to get closed-form solutions for $\beta$ and $\sigK^2$ that maximize $\ln(L)$ \cite{toal_kriging_2008}:
\begin{align}
	\beta(\gammavec; \X, \fgrad, \etaKg) 
		&= \frac{ \hat{\one}^\top \left( \Kg + \etaKg \Idty \right)^{-1} \fgrad}{ \onemod^\top \left( \Kg + \etaKg \Idty \right)^{-1} \onemod} \label{Eq_hp_mu} \\
	\sigK^2(\gammavec; \X, \fgrad, \etaKg, \beta) 
		&= \frac{ \left(\fgrad - \beta \onemod \right)^\top \left( \Kg + \etaKg \Idty \right)^{-1} \left(\fgrad - \beta \onemod \right) }{n_x(d+1)}. \label{Eq_hp_sigK}
\end{align}
Substituting these solutions for $\beta$ and $\sigK^2$ into $\ln(L)$ and dropping the constant terms gives

\begin{equation} \label{Eq_ln_lkd_final}
	\ln(L(\gammavec; \X, \etaKg, \sigK)) 
		= -\frac{ n_x(d+1) \ln(\sigK^2) + \ln(\det(\Kg + \etaKg \Idty)) }{2}. 
\end{equation}
The hyperparameters $\gammavec$ are set by maximizing \Eq{Eq_ln_lkd_final} numerically with the bound $\gammavec > 0$.

\begin{figure}[t]
	\centering
	\begin{subfigure}[t]{0.49\textwidth}
	\includegraphics[width=\textwidth]{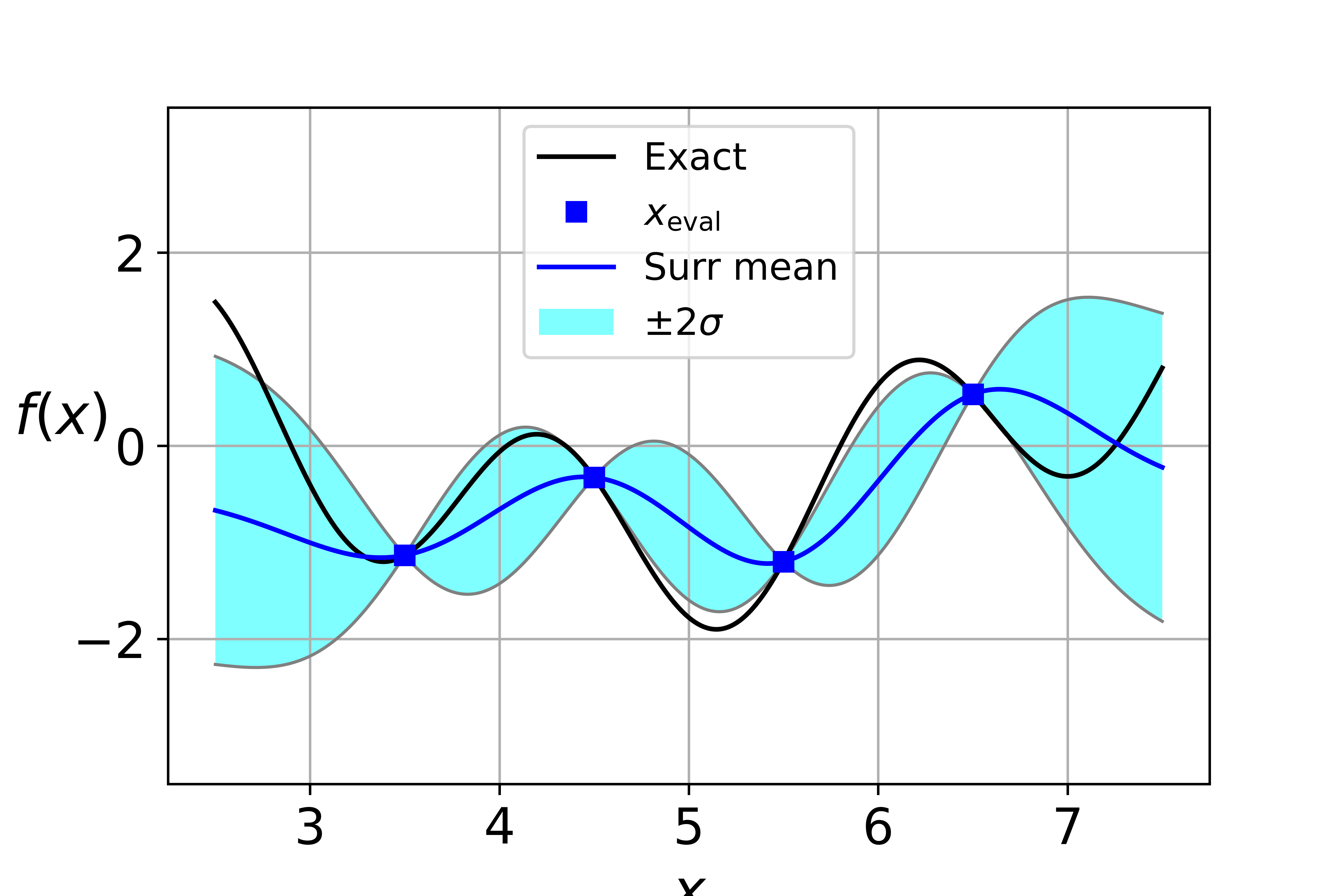}
	\caption{Gradient-free GP}
	\label{Fig_surr_1d_wo_grad}
	\end{subfigure}
	\hspace{0.05cm}
	\begin{subfigure}[t]{0.49\textwidth}
	\includegraphics[width=\textwidth]{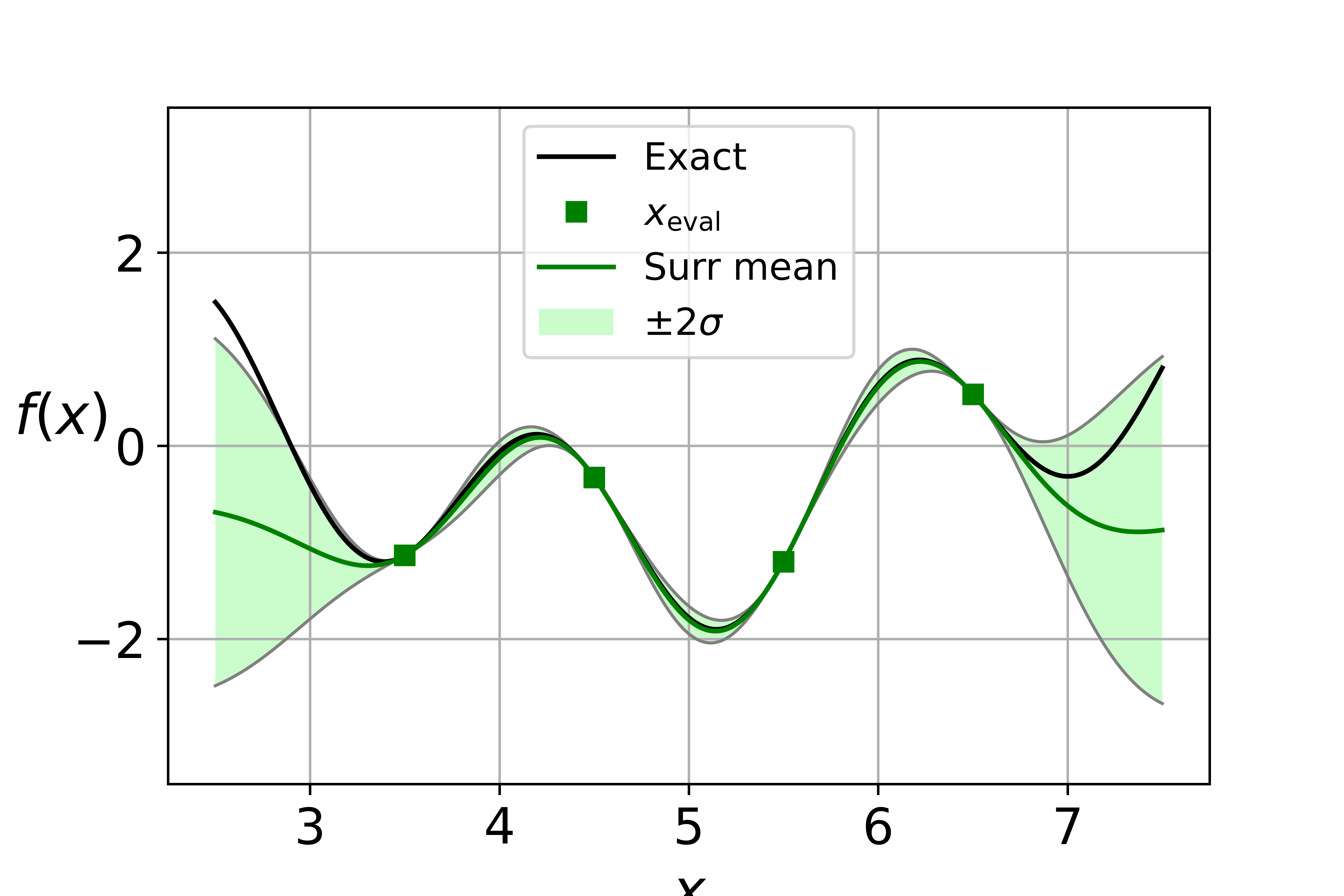}
	\caption{Gradient-enhanced GP}
	\label{Fig_surr_1d_w_grad}
	\end{subfigure}
	\caption{GPs with and without gradients that are approximating the function from \Eq{Eq_obj_surr_1d} with $\beta = -0.62$, $\sigK^2 = 1.07$, and $\gamma = 1.7$.}
	\label{Fig_surr_1d}
\end{figure}

To highlight the benefit of using gradients we compare a gradient-free and a gradient-enhanced GP that approximate the following function:
\begin{equation} \label{Eq_obj_surr_1d}
	f(x) = \sin(x) + \sin \left( \frac{10x}{3} \right),
\end{equation}
which was evaluated at $\xvec = [3.5, 4.5, 5.5, 6.5]^\top$. Maximizing \Eq{Eq_ln_lkd_final} numerically for the gradient-enhanced GP provides the following hyperparameters: $\beta = -0.62$, $\sigK^2 = 1.07$, and $\gamma = 1.7$. The gradient-free and gradient-enhanced GPs can be seen in \Figs{Fig_surr_1d_wo_grad}{Fig_surr_1d_w_grad}, respectively. For \Fig{Fig_surr_1d_w_grad} the black line is \Eq{Eq_obj_surr_1d}, the solid green line is the mean of the surrogate from \Eq{Eq_surr_mu}, and the light green area represents $\pm 2 \sigma_f(x)$, where $\sigma_f$ comes from \Eq{Eq_surr_sig}. For \Fig{Fig_surr_1d_wo_grad} the mean and standard deviation of the gradient-free GP are calculated with equations analogous to \Eqs{Eq_surr_mu}{Eq_surr_sig} that omit the gradient evaluations and use the gradient-free kernel matrix.

It is clear from \Fig{Fig_surr_1d} that the use of gradients to construct the GP significantly improves its accuracy and also reduces its uncertainty, \ie $\sigma_f$. The benefit of using gradients to construct a GP is even greater for higher-dimensional parameter spaces since the gradient provides more information as the number of parameters increases. However, a significant problem for gradient-enhanced GPs is that their covariance matrices becomes extremely ill-conditioned, which is addressed in the following section.

\section{Modified gradient-enhanced covariance matrix} \label{Sec_GpNew}

A modified gradient-enhanced kernel matrix $\Kgtil$ is derived that can be regularized with a modest nugget such that its condition number is bounded below the user-set threshold $\condmax$. From \Eq{Eq_grad_cov_mat} the noise-free and unregularized gradient-enhanced covariance matrix for the vector $\fgrad$ from \Eq{Eq_def_kvecgrad_fgrad} is $\Sigmag = \sigK^2 \Kg$. A correlation matrix can be formed from a covariance matrix by normalizing by the standard deviation of the random variables, \ie $\fgrad$, which are the square root of the values along the diagonal of $\Sigmag$. Our gradient-enhanced correlation matrix $\Kgtil$ is thus given by
\begin{align}
	\Kgtil 
		&= \left(\frac{1}{\sigK} \Pinv \right)^{-1} \Sigmag \left(\frac{1}{\sigK} \Pinv \right)^{-1} \nonumber \\
		&= \Pinv \Kg \Pinv,
\end{align}
where 
\begin{align}
	\Pmat
		&= \mydiag \left( \sqrt{ \mydiag \left( \Kg \right)} \right) \label{Eq_Pmat_gen} \\
		&= \mydiag ( \one_{n_x}, \underbrace{\gamma_1, \ldots, \gamma_1}_{n_x}, \ldots, \underbrace{\gamma_d, \ldots, \gamma_d}_{n_x} ). \label{Eq_Pmat}
\end{align}

From \Eq{Eq_Kg}, $\Kg$ is formed with the block matrices composed of $\K$, as well as its first and second derivatives with respect to the entries of the dimensional vectors $\xvec$ and $\yvec$ from \Eq{Eq_Gaussian_kernel}. Similarly $\Kgtil$ is formed with block matrices composed of $\K$ as well as its first and second derivatives with respect to the dimensionless variables $\xtil_i = \gamma_i x_i$ and $\ytil_i = \gamma_i y_i$ for $i \in \{1, \ldots, d \}$. Consider the following chain rule
\begin{align*}
	\p{k(\xvec, \yvec; \gammavec)}{\xtil_i} 
		&= \p{k(\xvec, \yvec; \gammavec)}{x_i} \p{x_i}{\xtil_i} \\
		&= \p{k(\xvec, \yvec; \gammavec)}{x_i} \frac{1}{\gamma_i}, \yesnumber \label{Eq_grad_xtil}
\end{align*}
where the term $\p{x_i}{\xtil_i} = \gamma_i^{-1}$ is contained along the diagonal of $\Pinv$. The modified gradient-enhanced kernel matrix can thus be calculated with
\begin{align*} 
\Kgtil(\X; \gammavec) 
	&= \Pinv \Kg \Pinv \\
	&=
\begin{bmatrix}
	\K 					& \p{\K}{\ytil_1} 				& \ldots		& \p{\K}{\ytil_d} \\
	\p{\K}{\xtil_1} 	& \pp{\K}{\xtil_1}{\ytil_1} 	& \ldots 	& \pp{\K}{\xtil_1}{\ytil_d} \\
	\vdots 				& 	\vdots							& \ddots 	& \vdots \\
	\p{\K}{\xtil_d} 	& \pp{\K}{\xtil_d}{\ytil_1} 	& \ldots 	& \pp{\K}{\xtil_d}{\ytil_d}
\end{bmatrix} \\
&= 
	\begin{bmatrix}
				\K & \Rtil_1 \odot \K & \ldots & \Rtil_d \odot \K \\ 
	-\Rtil_1 \odot \K & \left( \onebar - \Rtil_1 \odot \Rtil_1\right) \odot \K & \ldots & -\Rtil_1 \odot \Rtil_d \odot \K \\
	\vdots & \vdots & \ddots & \vdots	\\
	-\Rtil_d \odot \K & - \Rtil_1 \odot \Rtil_d \odot \K & \ldots & \left( \onebar - \Rtil_d \odot \Rtil_d\right) \odot \K \\
		\end{bmatrix}, \yesnumber \label{Eq_Kgtil}
\end{align*}
where $\onebar = \one_{n_x \times n_x}$, $\Rtil_i = \gamma_i \R_i(\X) = \R_i(\tilde{X})$, and $\tilde{X} = \X \Pmat$. The modified gradient-enhanced covariance matrix is
\begin{equation}
	\Sigmagtil(\X; \gammavec, \etaKgtil) = \sigK^2 \left( \Kgtil(\X; \gammavec) + \etaKgtil \Idty \right).
\end{equation}

\begin{figure}[t]
	\centering
	\begin{subfigure}[t]{0.49\textwidth}
	\includegraphics[width=\textwidth]{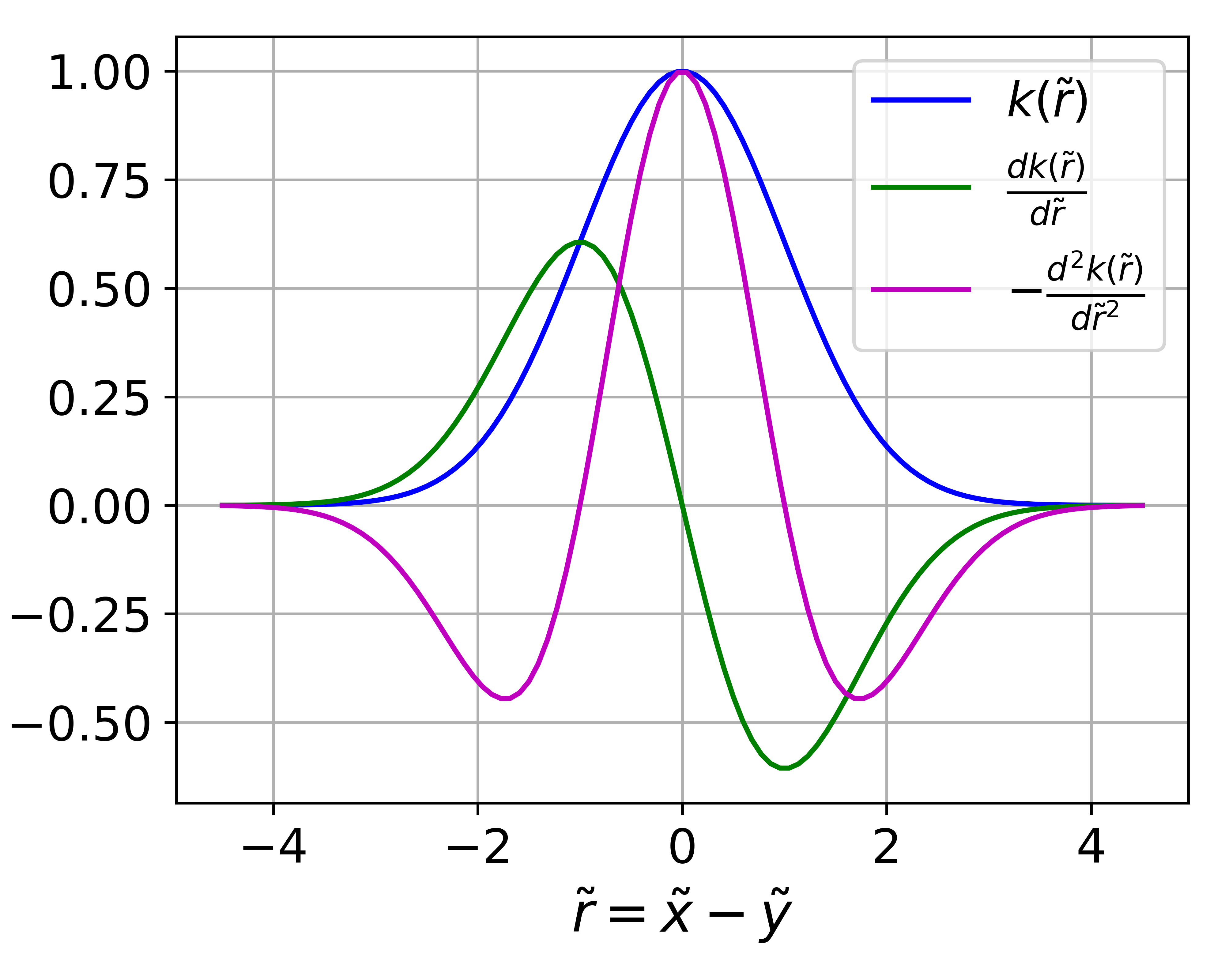}
	\caption{Correlations in one dimension}
	\label{Fig_CorrGaussianKernel_1d}
	\end{subfigure}
	\hspace{0.05cm}
	\begin{subfigure}[t]{0.49\textwidth}
	\includegraphics[width=\textwidth]{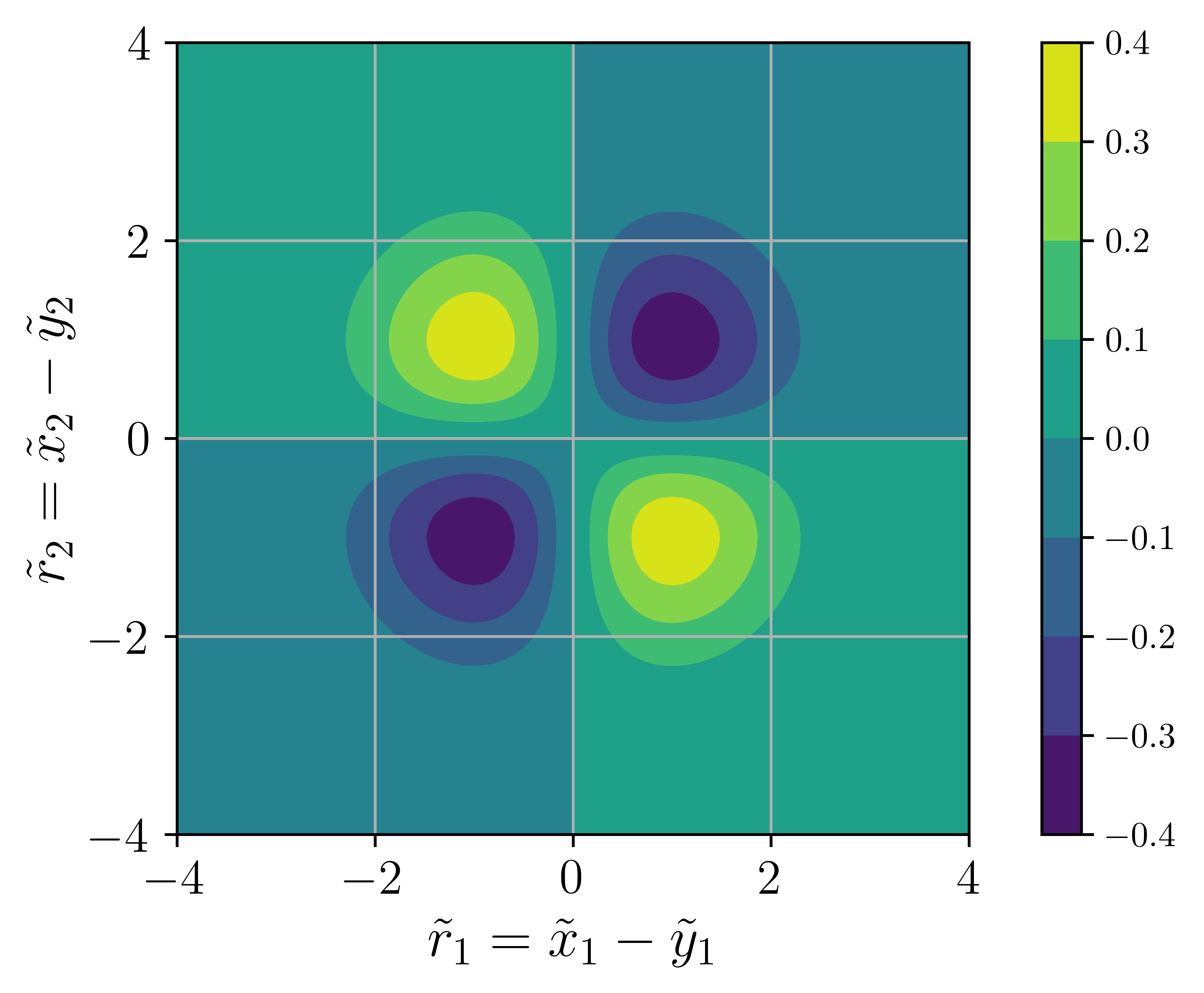}
	\caption{Correlation between $\p{f(\xvec)}{\xtil_1}$ and $\p{f(\yvec)}{\xtil_2}$}
	\label{Fig_CorrGaussianKernel_2d}
	\end{subfigure}
	\caption{Correlations for the Gaussian kernel from \Eq{Eq_Gaussian_kernel} for the function evaluations of a generic function $f(\xvec)$ and its gradient. The term $k(\rtil)$ is the correlation between $f(\xvec)$ and $f(\yvec)$, $\p{k(\rtil)}{\rtil}$ is the correlation between $f(\xvec)$ and $\p{f(\yvec)}{x}$, and the correlation between $\p{f(\xvec)}{x_i}$ and $\p{f(\yvec)}{x_j}$ is $-\pp{k(\rtil)}{\rtil_i}{\rtil_j}$.}
	\label{Fig_CorrGaussianKernel}
\end{figure}

The correlations for the entries in $\fgrad$ can be seen in \Fig{Fig_CorrGaussianKernel}. Having $\Kgtil$ as a correlation matrix makes the GP easier to interpret. Values in $\Kgtil$ close to $-1$ or $1$ indicate near perfect inverse or direct correlation, respectively. On the other hand, values close to $-1$ and $1$ in $\Kg$ indicate negative and positive relations, respectively, but provide little insight on the strength of the relations between the evaluation points.

\section{Bounding the condition number of the covariance matrix} \label{Sec_WellCondMtd}

\subsection{The use of a nugget} \label{Sec_WellCondMtd_Nugget}

A common approach to alleviate the ill-conditioning of a matrix is to add a nugget to its diagonal. For a GP, the addition of a nugget to the covariance matrix is analogous to having noisy data \cite{rasmussen_gaussian_2006,ameli_noise_2022}. When the nugget is zero, the surrogate from the GP will match the function of interest exactly at all points where it has been evaluated. The same applies to the evaluated gradients if a gradient-enhanced covariance matrix is used. However, if a positive nugget is used, the surrogate will generally not match the function of interest exactly at points where it has been sampled. It is therefore desirable to use the smallest nugget value required to ensure the condition number of the covariance matrix is below a desired threshold.

The eigenvalues of $\K$, $\Kg$, and $\Kgtil$ are real since they are symmetric matrices. We derive the required nugget to have $\kappa(\K + \etaK \Idty) \leq \condmax$ when the condition number is based on the $\ell_2$ norm:
\begin{align*}
	\kappa(\K + \etaK \Idty) 
		&= \frac{\eigmax + \etaK}{\eigmin + \etaK} \leq \frac{\eigmax}{\etaK} + 1 \leq \condmax \\
	\etaK 
		&\geq \frac{\eigmax}{\condmax - 1}, \yesnumber \label{Eq_eta_wrt_eigmax}
\end{align*}
where $\eigmin$ and $\eigmax$ are the smallest and largest eigenvalues of $\K$, respectively. The results are analogous for $\Kg$ and $\Kgtil$. For positive semidefinite matrices, such as $\K$, $\Kg$, and $\Kgtil$, we have $\eigmax(\K) \leq \trace{\K}$. From \Eq{Eq_eta_wrt_eigmax} it thus follows that sufficient nugget values to bound the condition numbers of the kernel matrices below $\condmax$ are
\begin{align}
	\etaK 		
		&= \frac{n_x}{\condmax - 1} \label{Eq_etaK}\\
	\etaKg(\gammavec) 		
		&= \frac{n_x(\one^\top \gammavec^2 + 1)}{\condmax - 1} \label{Eq_etaKg_w_trace} \\ 
	\etaKgtil 		
		&= \frac{n_x(d + 1)}{\condmax - 1}. \label{Eq_req_etaKgtil_w_trace}
\end{align}
These are sufficient but not necessary conditions to ensure that the condition numbers of the covariance matrices are smaller than $\condmax$ since the bound ${\eigmax < \trace{\K}}$ is not tight and \Eq{Eq_eta_wrt_eigmax} was derived with the worst case ${\eigmin = 0}$.

\Eq{Eq_etaK} provides a sufficiently small $\etaK$ to ensure that ${\kappa(\Sigma) \leq \condmax}$. However, $\etaKg$ from \Eq{Eq_etaKg_w_trace} is undesirable since it depends on $\gammavec$. Consequently, as $\gammavec$ gets larger, $\etaKg$ must also get bigger to ensure that $\kappa(\Sigmag(\gammavec)) \leq \condmax$. The method from \cite{marchildon_non-intrusive_2023} to address this is summarized in \Sec{Sec_WellCondMtd_Rescale}. Finally, $\etaKgtil$ from \Eq{Eq_req_etaKgtil_w_trace} ensures that $\kappa(\Sigmagtil) \leq \condmax$ with no dependence on $\gammavec$ and could be used on its own. However, a tighter bound on $\etaKgtil$ is derived in \Sec{Sec_WellCondMtd_Precon}. that scales with $\mathcal{O}(n_x \sqrt{d})$ instead of $\mathcal{O}(n_x d)$ when \Eq{Eq_req_etaKgtil_w_trace} is used. In the remaining subsections three methods are introduced that bound the condition number of the gradient-enhanced covariance matrix.


\subsection{Baseline method: constrained optimization of $\gammavec$} \label{Sec_WellCondMtd_Baseline}

One previous approach that has been used to ensure that $\kappa(\Sigmag) \leq \condmax$ is to add a constraint to the maximization of the marginal log-likelihood from \Eq{Eq_ln_lkd_final} \cite{won_maximum_2006}. The hyperparameters $\gammavec$ are thus selected by solving the following constrained optimization problem:
\begin{equation} \label{Eq_optz_lkd}
	\gammavec^*
		= \argmax_{\gammavec > 0} L(\gammavec) \quad \text{s.t.} \quad \kappa(\Sigmag(\gammavec)) \leq \condmax.
\end{equation}
There will always be a feasible solution to \Eq{Eq_optz_lkd} if $\etaKg \geq \frac{n_x}{\condmax -1}$. This can be verified from \Eq{Eq_etaKg_w_trace} with $\gammavec \rightarrow \zero_d$. Solving \Eq{Eq_optz_lkd} to set the hyperparameters thus ensures that $\kappa(\Sigmag(\X, \gammavec)) \leq \condmax \forall \, \X \in \mathbb{R}^{n_x \times d}$. However, the constraint in \Eq{Eq_optz_lkd} may result in selecting hyperparameters that provide a significantly lower marginal log-likelihood. This impacts the accuracy of the surrogate and is shown in \Sec{Sec_Results_Optz} to be detrimental to the optimization.

\subsection{Rescaling method} \label{Sec_WellCondMtd_Rescale}

A short overview of the rescaling method from \cite{marchildon_non-intrusive_2023} is provided in this section; the proofs can be found in \cite{marchildon_non-intrusive_2023}. To ensure that $\kappa(\Sigmag(\gammavec; \etaKg)) \leq \condmax$ when $\gamma_1 = \ldots = \gamma_d$ and $\Sigmag$ is not diagonally dominant, the parameter space is rescaled such that the minimum Euclidean distance between evaluation points is $\vminset$, where
\begin{align}
	\vminset(d, n_x) 
		&= \min \left( 2 \sqrt{d}, \frac{2 + \sqrt{4 + 2e^2 \ln \left( \frac{(n_x - 1)(1 + 2 \sqrt{d} )}{2} \right) }}{e} \right).
\end{align}
The condition that $\gamma_1 = \ldots = \gamma_d$ can be achieved by rescaling the data non-isotropically \cite{marchildon_non-intrusive_2023}. Meanwhile, the condition that $\Sigmag$ is not diagonally dominant is required since the condition number of $\Sigmag$ is unbounded as $\gammavec$ tends to infinity, regardless of the selected nugget. However, since the condition on the diagonal dominance of $\Sigmag$ only applies for large values of $\gammavec$, it is not in practice limiting since there is little correlation between evaluation points and thus the marginal log-likelihood is unlikely to be maximized at these values of $\gammavec$ \cite{marchildon_non-intrusive_2023}. 

For the initial isotropic rescaling we use
\begin{align*}
	\X 
		&= \tau \X_{\text{initial}} \\
	\p{f(\xvec)}{x_i} 
		&= \frac{1}{\tau} \left(\p{f(\xvec)}{x_i} \right)_{\text{initial}},
\end{align*}
where $\tau = \frac{\vminset}{v_{\text{min,initial}}}$. The required nugget to bound the condition number of $\Sigmag$ with the rescaling method is
\begin{equation}
	\etaKg(d, n_x) 
		= \frac{1 + (n_x -1) \frac{2 \sqrt{d}}{\vreq} e^{\frac{\vreq}{2 \sqrt{d}} -1}}{\condmax - 1}.
\end{equation}
The hyperparameters are set by solving the optimization problem from \Eq{Eq_optz_lkd}. Since the bound $\kappa(\Sigmag(\gammavec; \etaKg)) \leq \condmax$ is only ensured when $\gamma_1 = \ldots = \gamma_d$, the maximization of the marginal log-likelihood needs to contain a constraint on the condition number to ensure that it does not exceed $\condmax$, which could cause the Cholesky decomposition to fail. If the constraint is not active, even if all the hyperparameters are not equal, then no further iterations are required and the optimized hyperparameters can be used. However, if the constraint on the condition number is active, then a non-isotropic rescaling of $\X$ and the gradients can be performed to get an unconstrained solution to \Eq{Eq_optz_lkd}. The details on the non-isotropic rescaling can be found in \cite{marchildon_non-intrusive_2023}. 


\subsection{Preconditioning method with $\etaKgtil = \mathcal{O}(n_x \sqrt{d})$} \label{Sec_WellCondMtd_Precon}

\Eq{Eq_req_etaKgtil_w_trace} can be used to select $\etaKgtil$ but it results in $\etaKgtil = \mathcal{O}(n_x d)$. In this section a smaller sufficient nugget is derived such that $\etaKgtil = \mathcal{O}(n_x \sqrt{d})$ is sufficient to ensure that $\kappa(\Sigmagtil(\gammavec;\etaKgtil)) \leq \condmax \, \forall \, \gammavec > 0$. The derivation uses the Gershgorin circle theorem, which bounds the largest eigenvalue of a symmetric matrix $\A$ by 
\begin{equation}
	\eigmax(\A) \leq \max_{i} (a_{ii} + \sum_{j\neq i} |a_{ij} |).
\end{equation}
The two following propositions provide an upper bound on the sum of the absolute values of the off-diagonal entries of $\Kgtil$ when it is constructed with the Gaussian kernel from \Eq{Eq_Gaussian_kernel}. 

\begin{proposition} \label{Prop_ura}
	For $n_x, d \in \mathbb{Z}^+$ the sum of the absolute values for the off-diagonal entries for any of the first $n_x$ rows of $\Kgtil$ from \Eq{Eq_Kgtil} using the Gaussian kernel is bounded by $u_{\text{G}}$, where
\begin{equation} \label{Eq_ub_Kgtil_G}
	u_{\text{G}}(n_x,d) = (n_x-1) \frac{1 + \sqrt{1 + 4d}}{2} e^{-\frac{1 + 2d - \sqrt{1 + 4d}}{4d}}.
\end{equation}	
	
\end{proposition}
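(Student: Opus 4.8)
The plan is to fix a row index $k \in \{1,\ldots,n_x\}$, form the sum of the absolute values of the off-diagonal entries of that row of $\Kgtil$, and bound it by a quantity independent of $k$ and of the data $\X$. From the block form in \Eq{Eq_Kgtil}, row $k$ (with $k \le n_x$) contains: the diagonal entry $1$ together with the entries $k(\xvec_{k:},\xvec_{l:};\gammavec)$ for $l \ne k$ coming from the first block $\K$; and, in the $j$-th subsequent block $\Rtil_j \odot \K$ (for $j = 1,\ldots,d$), the entries $\tilde r_{kl,j}\, k(\xvec_{k:},\xvec_{l:};\gammavec)$ for $l = 1,\ldots,n_x$, where $\tilde r_{kl,j} = \gamma_j(x_{kj} - x_{lj})$ is the $(k,l)$ entry of $\Rtil_j$. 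Since $\tilde r_{kk,j} = 0$, the $l = k$ term vanishes in every subsequent block, so the only off-diagonal entries come in groups indexed by $l \ne k$, and using $k(\xvec_{k:},\xvec_{l:};\gammavec) = \exp(-\tfrac12\|\rvectil_{kl}\|_2^2)$ from \Eq{Eq_Gaussian_kernel} I would write
\begin{align*}
	S_k &:= \sum_{l \ne k} \big| k(\xvec_{k:},\xvec_{l:};\gammavec) \big| \Big( 1 + \sum_{j=1}^d |\tilde r_{kl,j}| \Big) \\
	&= \sum_{l \ne k} e^{-\frac12 \|\rvectil_{kl}\|_2^2} \big( 1 + \|\rvectil_{kl}\|_1 \big),
\end{align*}
where $\rvectil_{kl} \in \mathbb{R}^d$ has components $\tilde r_{kl,j}$, $j = 1,\ldots,d$.

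Next I would bound each of the $n_x - 1$ summands by the global maximum of $g(\vvec) := e^{-\frac12\|\vvec\|_2^2}(1 + \|\vvec\|_1)$ over $\vvec \in \mathbb{R}^d$, which gives $S_k \le (n_x - 1)\max_{\vvec} g(\vvec)$. To evaluate this maximum I would reduce it to a one-variable problem: by Cauchy--Schwarz, $\|\vvec\|_1 \le \sqrt{d}\,\|\vvec\|_2$, hence $\|\vvec\|_2^2 \ge \|\vvec\|_1^2/d$, so replacing $\|\vvec\|_2$ by the smaller $\|\vvec\|_1/\sqrt{d}$ only enlarges the exponential factor and $g(\vvec) \le (1 + \|\vvec\|_1)\, e^{-\|\vvec\|_1^2/(2d)} = h(\|\vvec\|_1)$ with $h(s) := (1+s)\,e^{-s^2/(2d)}$; equality holds for any $\vvec$ whose components all share the same magnitude, so nothing is lost. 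Therefore $\max_{\vvec} g(\vvec) = \max_{s \ge 0} h(s)$.

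Finally I would optimize $h$ on $[0,\infty)$: since $h'(s) = e^{-s^2/(2d)}\big(1 - s(1+s)/d\big)$ and the bracket is a downward parabola in $s$ equal to $1$ at $s = 0$, $h'$ is positive then negative, changing sign at the unique positive root of $s^2 + s - d = 0$, namely $s^* = \tfrac12(-1 + \sqrt{1+4d})$, which is therefore the global maximizer. Using $1 + s^* = \tfrac12(1+\sqrt{1+4d})$ and $(s^*)^2 = \tfrac12(1 + 2d - \sqrt{1+4d})$ yields $h(s^*) = \tfrac12(1 + \sqrt{1+4d})\,\exp\!\big(-\tfrac{1 + 2d - \sqrt{1+4d}}{4d}\big)$, so that $S_k \le (n_x-1)\,h(s^*) = u_{\text{G}}(n_x,d)$, which is \Eq{Eq_ub_Kgtil_G}.

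There is no serious obstacle here; the only steps needing care are applying Cauchy--Schwarz in the correct direction (so that substituting $\|\vvec\|_1/\sqrt{d}$ for $\|\vvec\|_2$ increases $g$ rather than decreasing it) and confirming that the stationary point of $h$ is indeed its maximum on $[0,\infty)$ rather than merely a critical point.
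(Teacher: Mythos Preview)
Your proof is correct and follows essentially the same approach as the paper: both reduce to bounding $(n_x-1)\max_{\wvec \ge 0}(1+\one^\top\wvec)\,e^{-\|\wvec\|_2^2/2}$ and observe that the maximizer has all components equal. The only cosmetic difference is that the paper establishes the equal-component optimizer by setting each partial derivative to zero, whereas you reach the same one-variable reduction via the Cauchy--Schwarz inequality $\|\vvec\|_2^2 \ge \|\vvec\|_1^2/d$; your route is slightly slicker but arrives at the identical optimization in $s = \|\vvec\|_1 = d\alpha$ and the same critical value.
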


\begin{proof}
{
	We derive an upper bound for the sum of the absolute values for the off-diagonal entries for any of the first $n_x$ rows of $\Kgtil$ from \Eq{Eq_Kgtil}. The derivation is the same for any of the rows and we thus consider the $a$-th row, where $1 \leq a \leq n_x$ and
\begin{align*}
	\sum_{\substack{i=1 \\ i\neq a}}^{n_x} \left( \Kgtil \right)_{ai} 
		&= \sum_{\substack{i=1 \\ i\neq a}}^{n_x} \left( 1 + \sum_{j=1}^d \left| \xtil_{aj} - \xtil_{ij} \right| \right) e^{-\frac{\| \xtil_{a:} - \xtil_{i:} \|_2^2 }{2}} \\
		& \leq (n_x - 1) \max_{i} \left( 1 + \sum_{j=1}^d \left| \xtil_{aj} - \xtil_{ij} \right| \right) e^{-\frac{\| \xtil_{a:} - \xtil_{i:} \|_2^2 }{2}} \\
		&\leq (n_x -1) \max_{\wvec \geq 0} \left( \left(1 + \wvec^\top \one_d \right) e^{-\frac{\wvec^\top \wvec}{2}} \right),
\end{align*} 
where $w_j = |\xtil_{aj} - \xtil_{ij}|$ and we denote the expression inside the max function as $g(\wvec)$. To identify the maximum of $g(\wvec)$ we calculate its derivative and set it to zero:
\begin{align*}
	\p{g(\wvec)}{w_i} 
		&= \left( 1 - w_i (1 + \wvec^\top \one_d) \right) e^{-\frac{\wvec^\top \wvec}{2}} = 0 \\
	w_i &= \frac{1}{1 + \wvec^\top \one} \, \forall \, i \in \{1, \ldots, d \}.
\end{align*}
It is clear that the gradient of $g(\wvec)$ is zero if and only if all of the entries in $\wvec$ are equal. We thus use $\wvec = \alpha \one_d$ and solve for the value of $\alpha$ that maximizes $g(\alpha \one_d)$:
\begin{align*}
	\p{g(\alpha \one_d)}{\alpha} 
		= d\left(1 - \alpha (1 + d \alpha) \right) e^{-\frac{d \alpha^2}{2}} 
		&= 0 \\
	\alpha^* &= \frac{-1+ \sqrt{1 + 4d}}{2d},
\end{align*}
where we only kept the positive root since $\wvec \geq 0$ and it is straightforward to verify that this provides the maximum of $g(\wvec)$. \Eq{Eq_ub_Kgtil_G} is recovered by evaluating $g(\alpha^* \one_d)$, which completes the proof. 
}
\end{proof}

\begin{proposition} \label{Prop_urb}
	The sum of the absolute values for the off-diagonal entries for any of the last $n_x d$ rows of $\Kgtil$ using the Gaussian kernel is smaller than $u_{\text{G}}(n_x,d)$ from \Eq{Eq_ub_Kgtil_G} for $n_x, d \in \mathbb{Z}^+$.	
\end{proposition}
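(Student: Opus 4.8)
The plan is to mimic the proof of \Prop{Prop_ura}: reduce the row sum to a pointwise bound on a scalar function and then split into the two cases $w_m \leq 1$ and $w_m > 1$.

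First I would fix one of the last $n_x d$ rows, say the $b$-th row ($1 \leq b \leq n_x$) of the $\partial/\partial\xtil_m$ block of $\Kgtil$, and for each evaluation point $i$ set $\wvec^{(i)} = (|\xtil_{b1} - \xtil_{i1}|, \ldots, |\xtil_{bd} - \xtil_{id}|)$, so that $\K_{bi} = e^{-\frac12 \|\wvec^{(i)}\|_2^2}$. Reading the row's entries off \Eq{Eq_Kgtil} block by block, and writing $w_l$ for the $l$-th component of $\wvec^{(i)}$, the $d+1$ columns associated with point $i$ contribute, in absolute value, $w_m\,\K_{bi}$ from the function block, $|1 - w_m^2|\,\K_{bi}$ from the $(m{+}1)$-st block (the diagonal entry $1$ occurring when $i = b$), and $w_m w_l\,\K_{bi}$ from the $(l{+}1)$-st block for each $l \neq m$. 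Summing, the total of the absolute values of the off-diagonal entries of the row equals $\sum_{i \neq b} h(\wvec^{(i)})$ with
\[
	h(\wvec) = \Bigl( w_m\bigl(1 + \textstyle\sum_{l \neq m} w_l\bigr) + |1 - w_m^2| \Bigr)\, e^{-\frac12 \wvec^\top \wvec}, \qquad \wvec = (w_1, \ldots, w_d) \geq 0 .
\]
Since there are $n_x - 1$ such summands, it suffices to prove the strict bound $h(\wvec) < \max_{\wvec \geq 0} g(\wvec) = u_{\text{G}}(n_x,d)/(n_x-1)$ for every $\wvec \geq 0$, where $g(\wvec) = (1 + \wvec^\top \one_d)\, e^{-\frac12 \wvec^\top \wvec}$ is exactly the function maximized in the proof of \Prop{Prop_ura}; note also $\max_{\wvec \geq 0} g(\wvec) \geq g(\frac1d \one_d) = 2 e^{-1/(2d)} > 1$.

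For $w_m \leq 1$ I would compare $h$ with $g$ at the same point: here $|1 - w_m^2| = 1 - w_m^2$, and a one-line computation gives $g(\wvec) - h(\wvec) = \bigl( (1 - w_m)\sum_{l \neq m} w_l + w_m^2 \bigr)\, e^{-\frac12 \wvec^\top \wvec} \geq 0$, so $h(\wvec) \leq g(\wvec) \leq \max g$; equality throughout would force $\wvec = \zero_d$, but $h(\zero_d) = 1 < \max g$, so in fact $h(\wvec) < \max g$ in this regime.

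The regime $w_m > 1$, where the comparison $h \leq g$ breaks down, is the crux. Here $|1 - w_m^2| = w_m^2 - 1$, and I would bound the two summands of $h$ separately using the elementary one-variable estimates $w_m e^{-\frac12 w_m^2} \leq e^{-1/2}$ and $(w_m^2 - 1)\, e^{-\frac12 w_m^2} \leq 2 e^{-3/2}$, together with $\sum_{l \neq m} w_l^2 \geq (\sum_{l \neq m} w_l)^2/(d-1)$ when $d \geq 2$ (the case $d = 1$ is immediate, since then $h(\wvec) \leq e^{-1/2} + 2 e^{-3/2} < \max g$). With $t = \sum_{l \neq m} w_l$ this yields $h(\wvec) \leq \bigl( e^{-1/2}(1 + t) + 2 e^{-3/2} \bigr)\, e^{-t^2/(2(d-1))}$, and the remaining obstacle, and the main one, is the purely scalar inequality that the maximum of this expression over $t \geq 0$, attained at the $t$ with $t(1 + t + 2/e) = d - 1$, is strictly below $\max_{\wvec \geq 0} g(\wvec) = \frac{1 + \sqrt{1+4d}}{2}\, e^{-(1 + 2d - \sqrt{1+4d})/(4d)}$ for every $d \in \mathbb{Z}^+$. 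I would settle this by substituting the critical $t$ to obtain a closed form for the bound, estimating it from above (it is of order $e^{-1}\sqrt{d}$, versus $e^{-1/2}\sqrt{d}$ on the right, so the margin is comfortable) and checking the finitely many small values of $d$ by hand. Combined with the previous paragraph this gives $h(\wvec) < \max_{\wvec \geq 0} g(\wvec)$ for all $\wvec \geq 0$, hence $\sum_{i \neq b} h(\wvec^{(i)}) < (n_x - 1)\max_{\wvec \geq 0} g(\wvec) = u_{\text{G}}(n_x,d)$; since the choice of row was arbitrary, the same bound holds for each of the last $n_x d$ rows.
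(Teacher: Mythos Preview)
Your argument is sound and follows a genuinely different route from the paper. The paper does not split on the size of $w_m$: it immediately replaces the diagonal-block term $|1-w_m^2|$ by the constant $1$ (invoking the correlation-matrix bound $|\partial^2\K/\partial\xtil_m\partial\ytil_m|\leq 1$) and is left with the single two-parameter maximization of $g_1(\nu,\alpha;d) = \bigl(1+\nu(1+(d-1)\alpha)\bigr)e^{-(\nu^2+(d-1)\alpha^2)/2}$, which it dispatches through a chain of auxiliary lemmas --- a closed-form optimal $\alpha$, a piecewise-linear majorant for the resulting square root $\sqrt{h_1(\nu;d)}$, a monotonicity-plus-l'H\^opital argument that a certain non-polynomial exponent $h_2(\nu;d)$ is nonpositive, and a final piecewise maximization in $\nu$. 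Your decomposition trades that machinery for the one-line pointwise comparison $h\leq g$ on $\{w_m\leq 1\}$ and an elementary tail estimate on $\{w_m>1\}$ obtained from the factorized bounds $w_m e^{-w_m^2/2}\leq e^{-1/2}$ and $(w_m^2-1)e^{-w_m^2/2}\leq 2e^{-3/2}$. What you gain is transparency in the dominant regime; the price is that the tail step is only sketched: the residual scalar inequality $e^{-1/2}(1+2/e+t)\,e^{-t^2/(2(d-1))} < u_{\text{G}}(n_x,d)/(n_x-1)$ for all $t\geq 0$ and $d\geq 2$ still needs to be made fully rigorous --- ``check small $d$ by hand'' should become an explicit threshold $d_0$ with an effective bound for $d\geq d_0$ and a short tabulation below it. With that detail supplied, your proof is complete and arguably more elementary than the paper's.
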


\begin{proof}
	The proof can be found in \Sec{Sec_ApxProof_Prop_urb}.
\end{proof}

As a result of \Props{Prop_ura}{Prop_urb} and the Gershgorin circle theorem we have $\eigmax(\Kgtil) \leq 1 + u_{\text{G}}(n_x,d)$, where $u_{\text{G}}(n_x,d)$ is from \Eq{Eq_ub_Kgtil_G}. Therefore, we can ensure that $\kappa(\Kgtil(\gammavec) + \etaKgtil \Idty) \leq \condmax \, \forall \, \gammavec >0$ with \Eqs{Eq_eta_wrt_eigmax}{Eq_ub_Kgtil_G}:
\begin{equation} \label{Eq_etaKgtil}
	\etaKgtil(n_x, d) = \frac{1 + (n_x-1) \frac{1 + \sqrt{1 + 4d}}{2} e^{-\frac{1 + 2d - \sqrt{1 + 4d}}{4d}}}{\condmax - 1}.
\end{equation}
The following lemma proves that \Eq{Eq_etaKgtil} provides $\etaKgtil = \mathcal{O}(n_x \sqrt{d})$.

\begin{lemma} \label{Lem_trend_etaKgtil}
	From \Eq{Eq_etaKgtil} we have $\etaKgtil(n_x, d) = \mathcal{O}(n_x \sqrt{d})$ for $n_x, d \in \mathcal{Z}^+$ since 
\begin{equation}
		\etaKgtil(n_x, d) < \frac{1 + (n_x - 1)(1 + \sqrt{d}) e^{-\frac{3 - \sqrt{5}}{4} }}{\condmax -1}.
\end{equation}
\end{lemma}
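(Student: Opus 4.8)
The plan is to compare \Eq{Eq_etaKgtil} term-by-term with the claimed upper bound, so that the asymptotic order $\mathcal{O}(n_x \sqrt{d})$ becomes manifest. The two expressions share the common factor $\frac{1}{\condmax - 1}$ and the additive constant $1$ in the numerator, so the entire task reduces to showing that for all $n_x, d \in \mathbb{Z}^+$
\begin{equation*}
	(n_x - 1) \frac{1 + \sqrt{1 + 4d}}{2} e^{-\frac{1 + 2d - \sqrt{1 + 4d}}{4d}} < (n_x - 1)(1 + \sqrt{d}) e^{-\frac{3 - \sqrt{5}}{4}}.
\end{equation*}
The factor $(n_x - 1)$ cancels when $n_x \geq 2$ (and the $n_x = 1$ case makes both sides vanish except for the shared constant, so it is trivial), leaving a pure inequality in $d$ alone. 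I would then split this into two independent sub-claims: first that the polynomial-type prefactor satisfies $\frac{1 + \sqrt{1 + 4d}}{2} \leq 1 + \sqrt{d}$, and second that the exponential factor satisfies $e^{-\frac{1 + 2d - \sqrt{1+4d}}{4d}} \leq e^{-\frac{3 - \sqrt{5}}{4}}$, i.e.\ that the exponent $\frac{1 + 2d - \sqrt{1+4d}}{4d}$ is bounded below by the constant $\frac{3 - \sqrt{5}}{4}$ for all $d \geq 1$.

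For the prefactor inequality, I would square or otherwise manipulate: $1 + \sqrt{1+4d} \leq 2 + 2\sqrt{d}$ is equivalent to $\sqrt{1 + 4d} \leq 1 + 2\sqrt{d}$, and squaring the (nonnegative) sides gives $1 + 4d \leq 1 + 4\sqrt{d} + 4d$, i.e.\ $0 \leq 4\sqrt{d}$, which holds trivially. For the exponential factor, the key observation is that the function $\phi(d) = \frac{1 + 2d - \sqrt{1 + 4d}}{4d}$ is monotonic in $d$; I would compute that $\phi(d) = \frac{1}{2} + \frac{1 - \sqrt{1+4d}}{4d}$ and analyze the behaviour of $\psi(d) = \frac{\sqrt{1+4d} - 1}{4d}$, which is decreasing in $d$ (this can be checked by writing $\psi(d) = \frac{1}{\sqrt{1+4d}+1}$ after rationalizing, which is manifestly decreasing). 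Hence $\phi$ is increasing, so its minimum over $d \geq 1$ is attained at $d = 1$, where $\phi(1) = \frac{1 + 2 - \sqrt{5}}{4} = \frac{3 - \sqrt{5}}{4}$. This gives exactly the constant appearing in the lemma, and since $t \mapsto e^{-t}$ is decreasing, $e^{-\phi(d)} \leq e^{-\phi(1)} = e^{-\frac{3-\sqrt{5}}{4}}$.

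Combining the two sub-claims multiplicatively (both factors are positive) yields the stated strict inequality — strict because the prefactor bound is strict whenever $d \geq 1$ (the slack is $4\sqrt{d} > 0$). Finally, to conclude $\etaKgtil(n_x,d) = \mathcal{O}(n_x \sqrt{d})$ I would simply note that $1 + (n_x - 1)(1 + \sqrt{d}) e^{-\frac{3-\sqrt{5}}{4}} \leq C\, n_x \sqrt{d}$ for a suitable absolute constant $C$ (for instance any $C \geq 1 + e^{-\frac{3-\sqrt{5}}{4}} \geq 1$ works once $n_x, d \geq 1$, since $1 \leq n_x\sqrt d$ and $(n_x-1)(1+\sqrt d) \leq 2 n_x \sqrt d$). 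The main obstacle, such as it is, is the monotonicity argument for $\phi(d)$: one must be careful to rationalize correctly and confirm the direction of monotonicity so that the minimum is genuinely at the left endpoint $d = 1$ rather than in the limit $d \to \infty$ (where $\phi \to \tfrac12 > \tfrac{3-\sqrt5}{4}$, which is consistent but must not be mistaken for the infimum). Everything else is elementary algebra.
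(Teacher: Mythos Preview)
Your proposal is correct and follows essentially the same two-step decomposition as the paper: bound the prefactor $\tfrac{1+\sqrt{1+4d}}{2}$ by $1+\sqrt{d}$, and bound the exponential by showing the exponent is monotone in $d$ with extremum at $d=1$. The only difference is technique for the monotonicity step: the paper differentiates the exponent and bounds the derivative using $\sqrt{1+4d}\leq 1+2\sqrt{d}$, whereas you rationalize $\psi(d)=\tfrac{\sqrt{1+4d}-1}{4d}=\tfrac{1}{\sqrt{1+4d}+1}$ to read off monotonicity directly without calculus, which is a little cleaner but not a genuinely different route.
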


\begin{proof}
	The proof can be found in \Sec{Sec_ApxProof_Lem_trend_etaKgtil}. 
\end{proof}

From \Lem{Lem_trend_etaKgtil} it is clear that the use of \Eq{Eq_etaKgtil} to calculate $\etaKgtil$ is advantageous for high dimensional problems since it provides $\etaKgtil = \mathcal{O}(n_x \sqrt{d})$ instead of $\etaKgtil = \mathcal{O}(n_x d)$ from \Eq{Eq_req_etaKgtil_w_trace}.

\section{Implementation} \label{Sec_Implementation}

Since the inverse of $\Kg + \etaKg \Idty$ is needed to calculate $\mu_f$, $\sigma_f^2$, and $\ln(L)$ along with their gradients, it is desirable to calculate its Cholesky decomposition. Once the Cholesky decomposition has been calculated, it becomes inexpensive to evaluate $\mu_f(\xvec)$ and $\sigma_f^2(\xvec)$ for various $\xvec$. However, doing so directly may cause the decomposition to fail since the condition number of $\Kg + \etaKg \Idty$ cannot be bounded with a finite $\etaKg$, as explained in \Sec{Sec_WellCondMtd_Nugget}. Instead, $\Kg$ can be used to construct $\Kgtil$ and the Cholesky decomposition of $\Kgtil + \etaKgtil \Idty$ is calculated instead. Calculating the Cholesky decomposition of $\Kgtil + \etaKgtil \Idty$ is numerically stable since $\kappa(\Kgtil(\gammavec) + \etaKgtil \Idty) \leq \condmax \, \forall \gammavec > 0$, as proven in \Sec{Sec_WellCondMtd}. The following algorithm details how the Cholesky decomposition for the gradient-enhanced covariance matrix should be calculated.

\begin{algorithm}
	\caption{Stable Cholesky decomposition for gradient-enhanced GP} \label{Alg_Stable_Cho}
	\begin{algorithmic}[1]
		\State{Select evaluation points $\X$ and hyperparameters $\gammavec$}
		\State{Calculate $\Kg$, $\Pmat$, and $\etaKgtil$ with \Eqss{Eq_Kg}{Eq_Pmat}{Eq_etaKgtil}, respectively}
		\State{$\Kgtil = \Pinv \Kg \Pinv$}
		\State{$\Ltil \Ltil^\top = \Kgtil + \etaKgtil \Idty$}
		\State{$\Lmat = \Pmat \Ltil$}
	\end{algorithmic}
\end{algorithm}

Algebraically we have the following relation:
\begin{align*}
	\Lmat \Lmat^T 
		= \left( \Kg + \etaKgtil \Pmat \Pmat \right)^{-1}
		= \Pinv \left( \Kgtil + \etaKgtil \Idty \right)^{-1} \Pinv.
\end{align*}
This indicates that the addition of the nugget $\etaKgtil$ after the preconditioning is equivalent to adding a nugget that varies along the diagonal and scales with the squared hyperparameter $\gammavec$. However, the condition number of $\left( \Kgtil + \etaKgtil \Idty \right)$ is bounded below $\condmax$ for $\gammavec > 0$ and can be several orders of magnitude smaller than the condition number for $\left( \Kg + \etaKgtil \Pmat \Pmat \right)$. As such, the Cholesky decomposition of the former should be performed, as detailed in Algorithm \ref{Alg_Stable_Cho}.

\section{Results} \label{Sec_Results}

\subsection{Condition numbers of the Gaussian kernel covariance matrices} \label{Sec_Results_CondKernSqExp}

The condition numbers of the gradient-free and gradient-enhanced covariance matrices are compared for the baseline method presented in \Sec{Sec_WellCondMtd_Baseline}, the rescaling method from [15], and the preconditioning method introduced in this paper. The covariance matrices depend only on the evaluation points in $\X$, the nugget, and the hyperparameters $\gammavec$. However, the maximization of the marginal log-likelihood also depends on the function of interest and we use the Rosenbrock function:
\begin{equation} \label{Eq_Rosenbrock}
	f(\xvec) = \sum_{i=1}^{d-1} \left[ 10 \left(x_{i+1} - x_i^2 \right)^2 + \left( 1 - x_i \right)^2 \right].
\end{equation}
A Latin hypercube sampling centred around $\xvec = [1, \ldots, 1]^\top$, which is the minimum for the Rosenbrock function, is used to select the evaluation points
\begin{equation} \label{Eq_X_dataset}
	\X = 10^{-3} \times 
\begin{bmatrix}
	\phantom{-}1 & \phantom{-}9 & \phantom{-}7 & -9 & -5 & -7 & -3 & \phantom{-}5 & \phantom{-}3 & -1 \\
	\phantom{-}1 & -3 & \phantom{-}7 & \phantom{-}3 & \phantom{-}5 & -9 & -7 & \phantom{-}9 & -1 & -5
\end{bmatrix}^\top
 + 1,
\end{equation}
where $\vmin = \sqrt{2} / 500 \approx 2.8 \times 10^{-3}$, which is the minimum Euclidean distance between evaluation points.

\begin{figure}[t!]
	\centering
	\begin{subfigure}[t]{0.49\textwidth}
	\includegraphics[width=\textwidth]{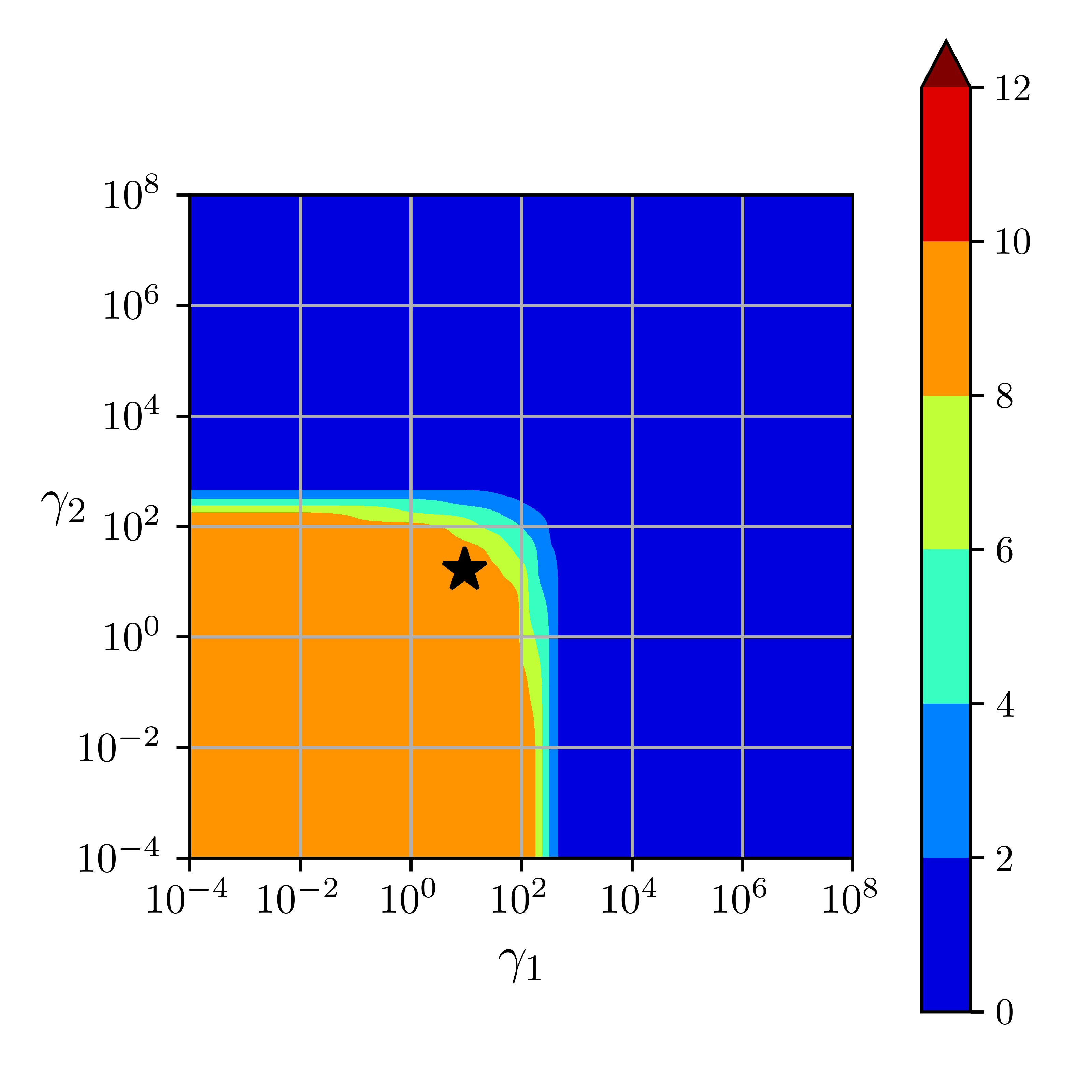}
	\vspace{-0.9cm}
	\caption{Baseline gradient-free: $\log(\kappa(\Sigma))$}
	\label{Fig_cond_2d_SqExp_Kbase}
	\end{subfigure}
	\hspace{0.05cm}
	\begin{subfigure}[t]{0.49\textwidth}
	\includegraphics[width=\textwidth]{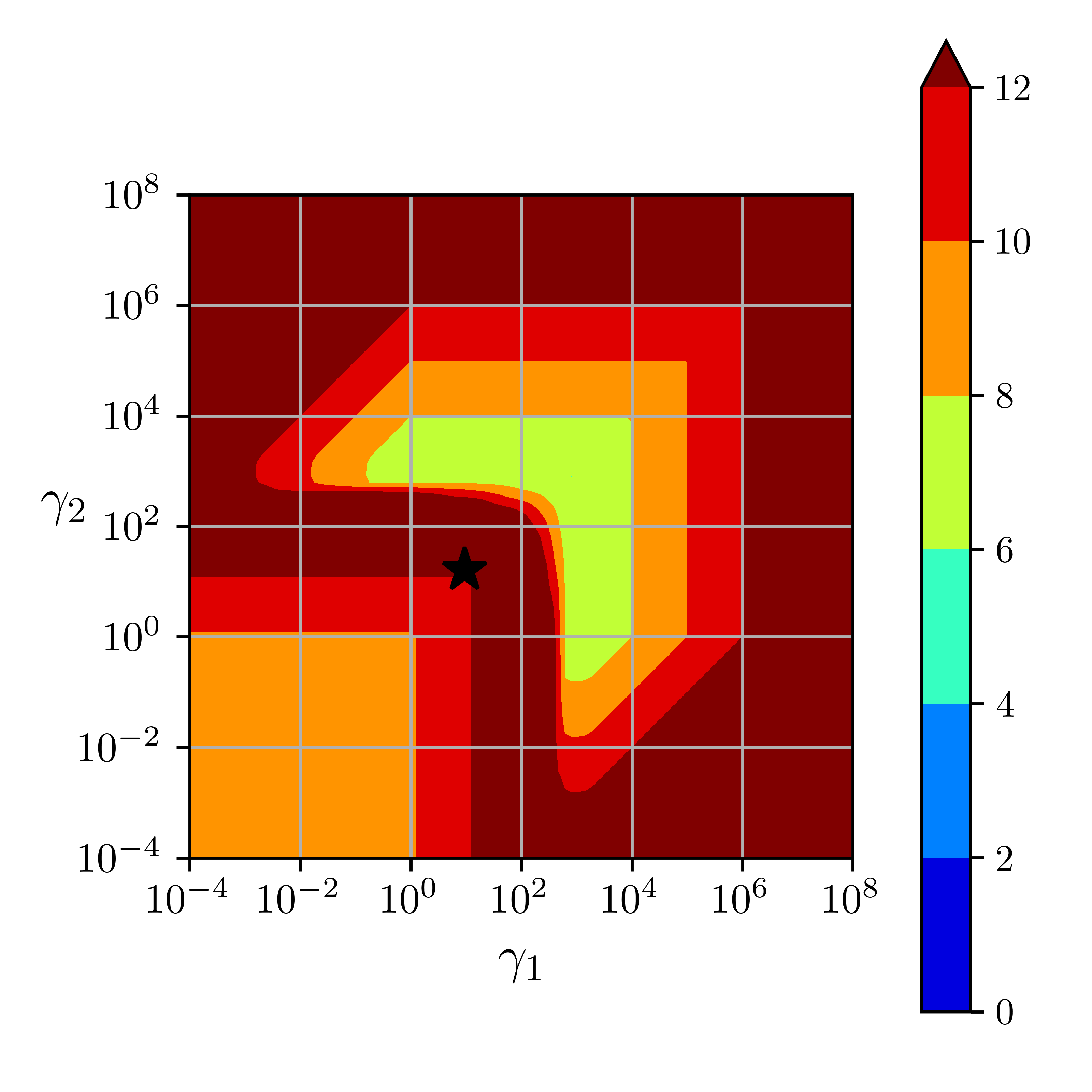}
	\vspace{-0.9cm}
	\caption{Baseline gradient-enhanced: $\log(\kappa(\Sigmag))$}
	\label{Fig_cond_2d_SqExp_Kgrad_baseline}
	\end{subfigure}
	\hspace{0.5cm}
	\begin{subfigure}[t]{0.49\textwidth}
	\includegraphics[width=\textwidth]{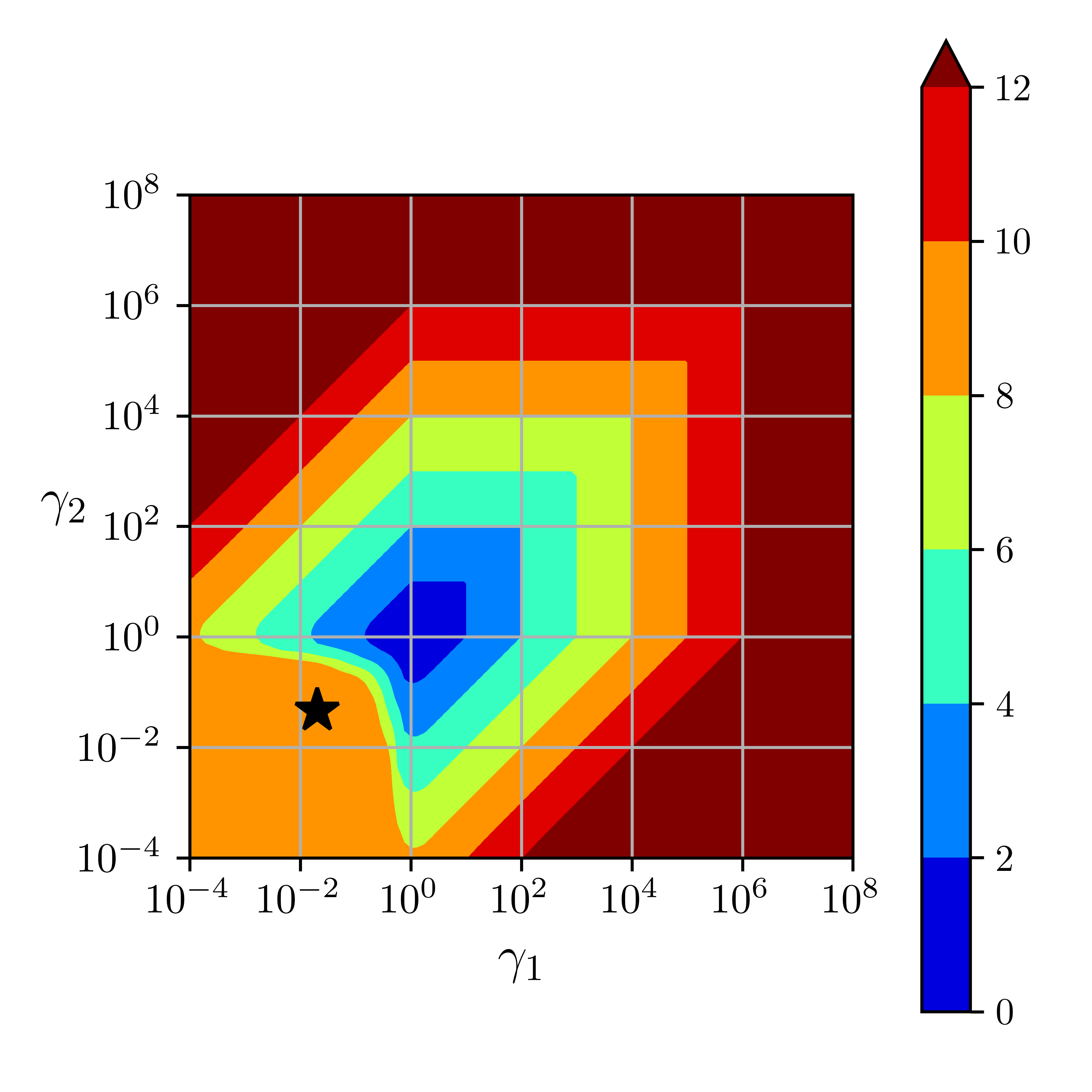}
	\vspace{-0.9cm}
	\caption{Gradient-enhanced with the rescaling method: $\log(\kappa(\Sigmag))$}
	\label{Fig_cond_2d_SqExp_Rescale}
	\end{subfigure}
	\hspace{0.05cm}
	\begin{subfigure}[t]{0.49\textwidth}
	\includegraphics[width=\textwidth]{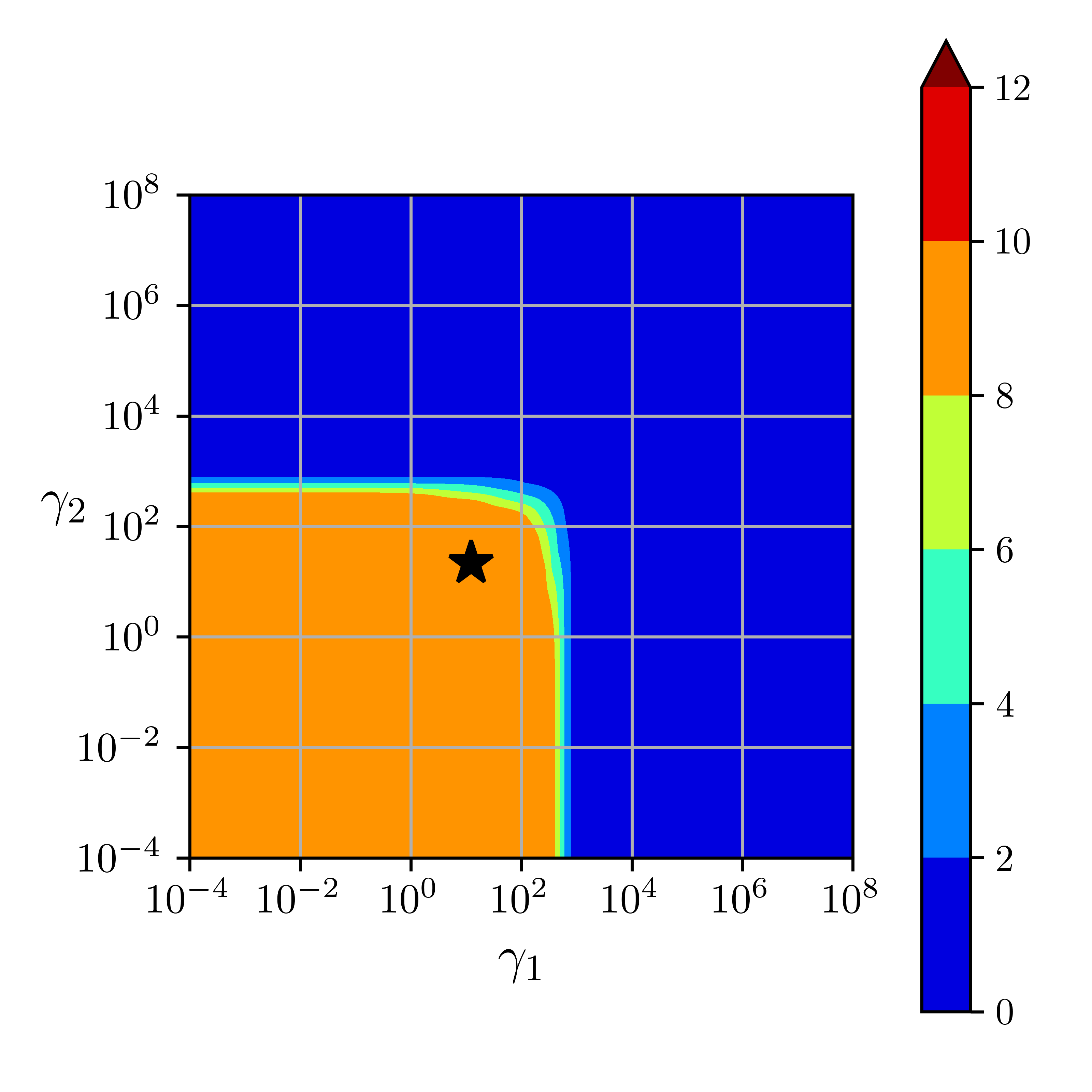}
	\vspace{-0.9cm}
	\caption{Gradient-enhanced with the preconditioning method: $\log(\kappa(\Sigmagtil))$}
	\label{Fig_cond_2d_SqExp_Precon}
	\end{subfigure}
	\caption{The condition number for covariance matrices using the nugget $\etaKgtil = 1.5 \times 10^{-9}$ from \Eq{Eq_etaKgtil} along with the set of evaluation points $\X$ from \Eq{Eq_X_dataset}, which has a minimum Euclidean distance between evaluation points of $\vmin = 2.8 \times 10^{-3}$. The star marking indicates where the marginal log-likelihood function from \Eq{Eq_ln_lkd_final} is maximized with the use of the Rosenbrock function from \Eq{Eq_Rosenbrock}.}
	\label{Fig_cond_2d_SqExp}
\end{figure}

\Fig{Fig_cond_2d_SqExp} plots the condition number of the covariance matrices as a function of $\gammavec$ for the evaluation points from \Eq{Eq_X_dataset}. The star marker indicates where the marginal log-likelihood from \Eq{Eq_ln_lkd_final} is maximized. The nugget value for all cases is $\eta = 1.5 \times 10^{-9}$, which comes from \Eq{Eq_etaKgtil}. Red regions in \Fig{Fig_cond_2d_SqExp} indicate where the condition number is greater than $\condmax = 10^{10}$. 

\Figs{Fig_cond_2d_SqExp_Kbase}{Fig_cond_2d_SqExp_Kgrad_baseline} plot the condition number of the gradient-free and gradient-enhanced covariance matrices, respectively, using the baseline method, which does not precondition the covariance matrix but adds the nugget to its diagonal. For the gradient-free case we have $\kappa(\Sigma(\gammavec)) < \condmax \, \forall \, \gammavec > 0$. However, for the gradient-enhanced case $\kappa(\Sigmag(\gammavec)) \geq \condmax$ for most values of $\gammavec$, including where the marginal log-likelihood from \Eq{Eq_ln_lkd_final} is maximized. Selecting the hyperparameters to satisfy the constraint $\kappa(\Sigmag(\gammavec)) \leq \condmax$ results in a lower marginal log-likelihood. This impacts the accuracy of the surrogate, which degrades the performance of the Bayesian optimizer, as will be shown in \Sec{Sec_Results_Optz}.

\Fig{Fig_cond_2d_SqExp_Rescale} plots $\log(\kappa(\Sigmag))$ with the use of the rescaling method. As stated in \Sec{Sec_WellCondMtd_Rescale}, the rescaling method only ensures that $\kappa(\Sigmag(\gammavec)) \leq \condmax$ when $\gamma_1 = \ldots = \gamma_d$ and $\Sigmag$ is not diagonally dominant. While there are several values of $\gammavec$ where $\kappa(\Sigmag(\gammavec)) \geq \condmax$, the condition number is below $\condmax$ where the marginal log-likelihood is maximized.

From \Fig{Fig_cond_2d_SqExp_Precon} we have $\kappa(\Kgtil) < \condmax \, \forall \, \gammavec > 0$ as a result of the preconditioning method. Consequently, there is no need for a constraint on the condition number for the optimization of the hyperparameters. This ensures that the hyperparameters are never constrained by the need to bound the condition number of the covariance matrix. Furthermore, this also provides a small reduction in the cost of the hyperparameter optimization since the constraint and its gradient do not need to be calculated. 

The blue in \Figs{Fig_cond_2d_SqExp_Kbase}{Fig_cond_2d_SqExp_Precon} indicate regions where $\Sigma$ and $\Sigmagtil$ are nearly equal to the identity matrix. While the condition number is very small in these regions, the marginal log-likelihood is as well. This makes it undesirable to select those values of $\gammavec$.

\subsection{Applications to other kernels} \label{Sec_Results_CondMiscKern}

The preconditioning method can be applied to gradient-enhanced covariance matrices that utilize kernels other than the Gaussian kernel considered thus far. For example, the preconditioning method can be applied to the Mat\'ern $\frac{5}{2}$ and rational quadratic kernels \cite{rasmussen_gaussian_2006}:
\begin{align}
	k_{\text{M}\frac{5}{2}}(\rvectil) 	
		&= \left(1 + \sqrt{3} \| \rvectil \| + \| \rvectil \|^2 \right) e^{- \sqrt{3} \| \rvectil \|} \label{Eq_kern_Mat5f2}\\
	k_{rq}(\rvectil) 
		&= \left(1 + \frac{ \| \rvectil \|^2 }{2 \alpha} \right)^{-\alpha}, \label{Eq_kern_RatQd}
\end{align}
where $\alpha > 0$ is a hyperparameter and $\rtil = \gamma_i (x_i - y_i)$. The hyperparameters for the Mat\'ern $\frac{5}{2}$ and rational quadratic kernels from \Eqs{Eq_kern_Mat5f2}{Eq_kern_RatQd}, respectively, have been selected such that the preconditioning matrix required to make their gradient-enhanced kernel matrix a correlation matrix also comes from \Eq{Eq_Pmat_gen}. Using \Eq{Eq_etaKg_w_trace} provides $\etaKgtil = \mathcal{O}(n_x d)$ and ensures that $\bSigmagtil$. Alternatively, \Eq{Eq_etaKgtil} could be used to have $\etaKgtil = \mathcal{O}(n_x \sqrt{d})$, but this does not provide a provable upper bound for $\kappa(\Sigmagtil)$ since \Eq{Eq_etaKgtil} is specific to the Gaussian kernel. The methodology used in \Sec{Sec_WellCondMtd_Precon} to derive a nugget $\etaKgtil = \mathcal{O}(n_x \sqrt{d})$ that ensures $\bSigmagtil$ could also be applied to other kernels, such as the Mat\'ern $\frac{5}{2}$ and rational quadratic kernels.

\Fig{Fig_cond_2d_Kern} plots the condition number of the baseline and preconditioned gradient-enhanced covariance matrices for the Mat\'ern $\frac{5}{2}$ and rational quadratic kernels. The set of evaluation points in $\X$ and the nugget come from \Eqs{Eq_X_dataset}{Eq_etaKgtil}, respectively, which are the same as the ones used for \Fig{Fig_cond_2d_SqExp}. It is clear from \Figs{Fig_cond_2d_Kern_Mat5f2_Kbase}{Fig_cond_2d_Kern_RatQd_Kbase} that the condition number for the baseline method, \ie the non-preconditioned gradient-enhanced covariance matrices, for both kernels is larger than $\condmax$ for several values of $\gammavec$, including where the marginal log-likelihood is maximized at the star marker. However, with the preconditioning method we have $\bSigmagtil$ for both kernels as seen in \Figs{Fig_cond_2d_Kern_Mat5f2_precon}{Fig_cond_2d_Kern_RatQd_precon}. This demonstrates that the gradient-enhanced covariance matrix constructed with various kernels suffers from severe ill-conditioning. Fortunately, the preconditioning method can be applied to bound the condition number of $\Sigmagtil$ constructed with various kernels. The provable bound $\bSigmagtil$ requires the nugget to be calculated from \Eq{Eq_etaKg_w_trace}, but using \Eq{Eq_etaKgtil} was sufficient for the case considered in \Fig{Fig_cond_2d_Kern}. A user of a gradient-enhanced GP with a non-Gaussian kernel could start by using a nugget calculated with \Eq{Eq_etaKgtil}, and switch to using \Eq{Eq_etaKg_w_trace} instead if the condition number of $\Sigmagtil$ was found to exceed $\condmax$.

\begin{figure}[t!]
	\centering
	\begin{subfigure}[t]{0.49\textwidth}
	\includegraphics[width=\textwidth]{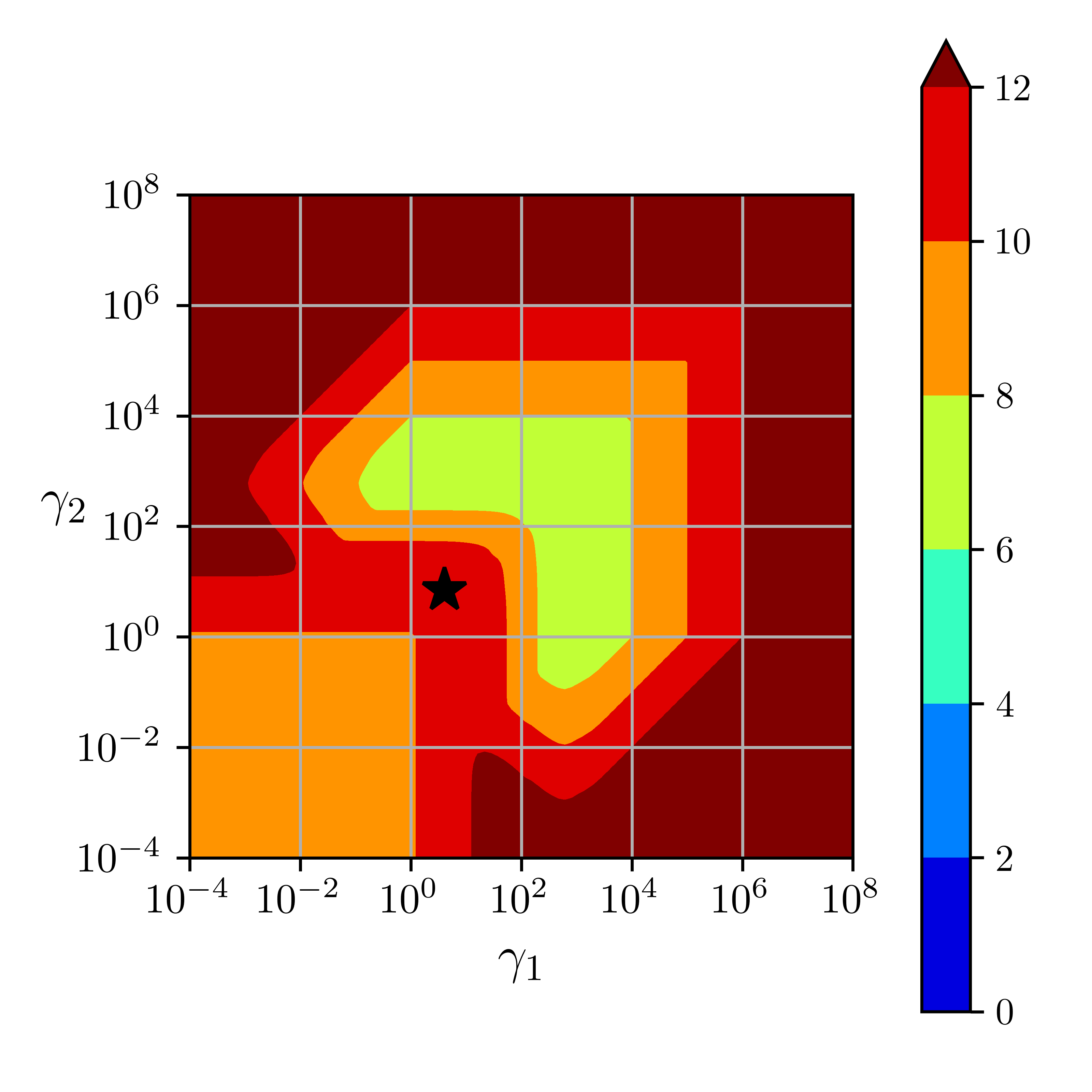}
	\vspace{-0.9cm}
	\caption{Baseline Mat\'ern $\frac{5}{2}$ kernel: $\log(\kappa(\Sigmag))$}
	\label{Fig_cond_2d_Kern_Mat5f2_Kbase}
\end{subfigure}
\hspace{0.05cm}
\begin{subfigure}[t]{0.49\textwidth}
	\includegraphics[width=\textwidth]{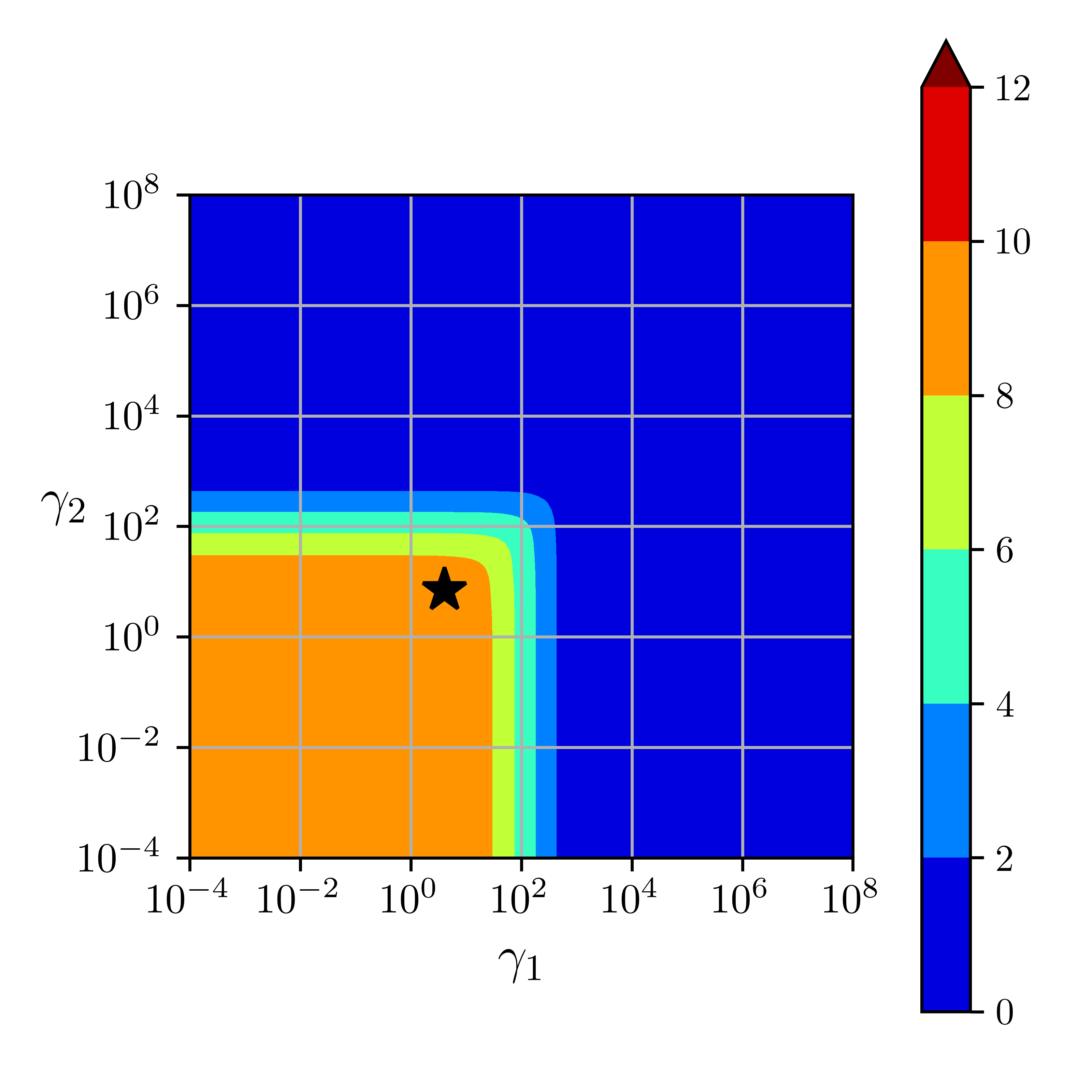}
	\vspace{-0.9cm}
	\caption{Preconditioned Mat\'ern $\frac{5}{2}$ kernel: $\log(\kappa(\Sigmagtil))$}
	\label{Fig_cond_2d_Kern_Mat5f2_precon}
\end{subfigure}
\hspace{0.5cm}
\begin{subfigure}[t]{0.49\textwidth}
	\includegraphics[width=\textwidth]{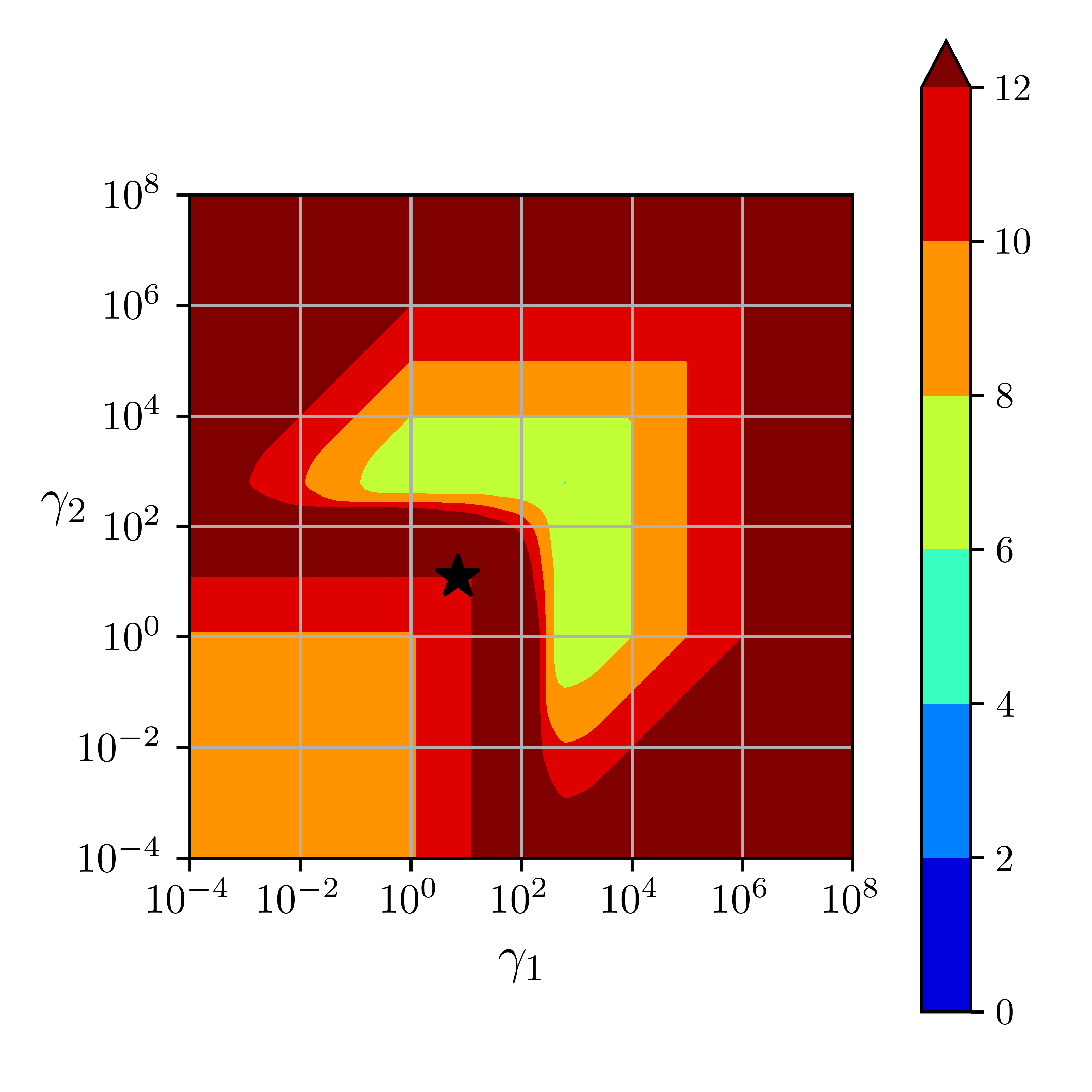}
	\vspace{-0.9cm}
	\caption{Baseline rational quadratic kernel: $\log(\kappa(\Sigmag))$}
	\label{Fig_cond_2d_Kern_RatQd_Kbase}
\end{subfigure}
\hspace{0.05cm}
\begin{subfigure}[t]{0.49\textwidth}
	\includegraphics[width=\textwidth]{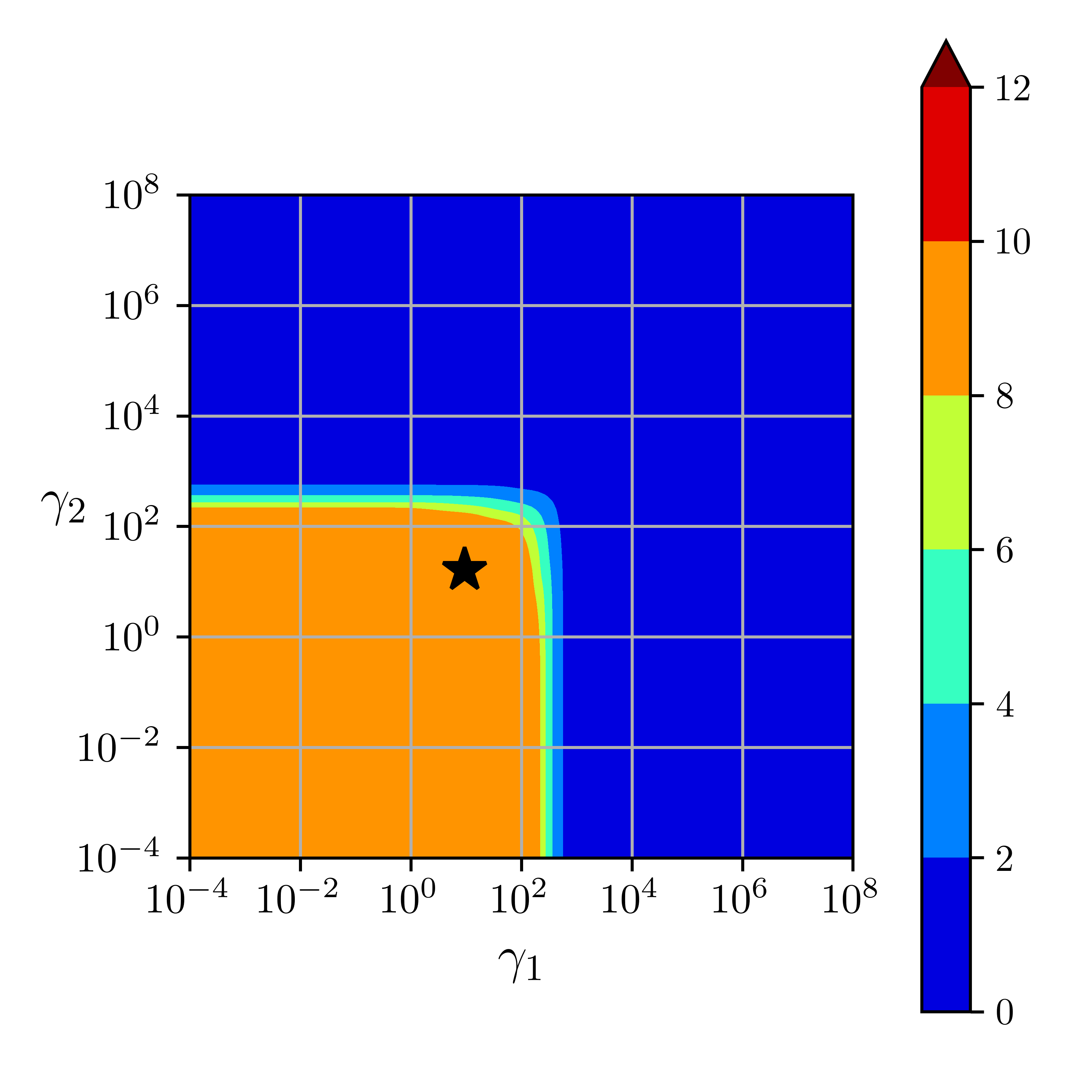} 
	\vspace{-0.9cm}
	\caption{Preconditioned rational quadratic kernel: $\log(\kappa(\Sigmagtil))$}
	\label{Fig_cond_2d_Kern_RatQd_precon}
\end{subfigure}
\caption{The condition number of the gradient-enhanced covariance matrix with the baseline and preconditioned methods. All cases use \Eq{Eq_X_dataset} for the set of evaluation points $\X$ and \Eq{Eq_etaKgtil} to calculate $\etaKgtil = 1.5 \times 10^{-9}$. The value of the hyperparameters $\gammavec$ that maximizes the marginal log-likelihood function from \Eq{Eq_ln_lkd_final} with the Rosenbrock function from \Eq{Eq_Rosenbrock} is indicated by the star marker.}
\label{Fig_cond_2d_Kern}
\end{figure}

\subsection{Optimization} \label{Sec_Results_Optz}

In this section, the baseline, rescaling, and preconditioning methods are compared when used with a Bayesian optimizer to minimize the Rosenbrock function from \Eq{Eq_Rosenbrock} with $d \in \{2, 5, 10, 15 \}$. For each test case, the optimization is repeated five separate times for each method. The starting points are selected with the Latin hypercube sampling from the open source Surrogate Modeling Toolbox with a lower bound of $-10$ and an upper bound of $10$ for the parameters, and the random state set to 1. This ensures that the optimizer for each method starts from the same initial solution. The selected acquisition function is the upper-confidence function
\begin{equation}
	h(\xvec) = \mu_f(\xvec) + \omega \sigma_f(\xvec),
\end{equation}
where $\omega \geq 0$. The parameter $\omega$ promotes exploitation when it is small, and exploration when it is large. We use $\omega = 0$ since we are interested in local optimization for the unimodal Rosenbrock function. The gradient-based SLSQP optimizer from the Python library SciPy is used to select the hyperparameters by maximizing the marginal log-likelihood. The same optimizer is used to minimize the acquisition function to select the next point in the parameter space to evaluate the Rosenbrock function. A trust region is used in the minimization of the acquisition function, similar to the one used in \cite{eriksson_scalable_2019}, where a Bayesian optimizer was also used for local minimization. However, our trust region is set to be a hypersphere instead of a hyperrectangle.

\begin{figure}[t]
\centering
\begin{subfigure}[t]{0.49\textwidth}
	\includegraphics[width=\textwidth]{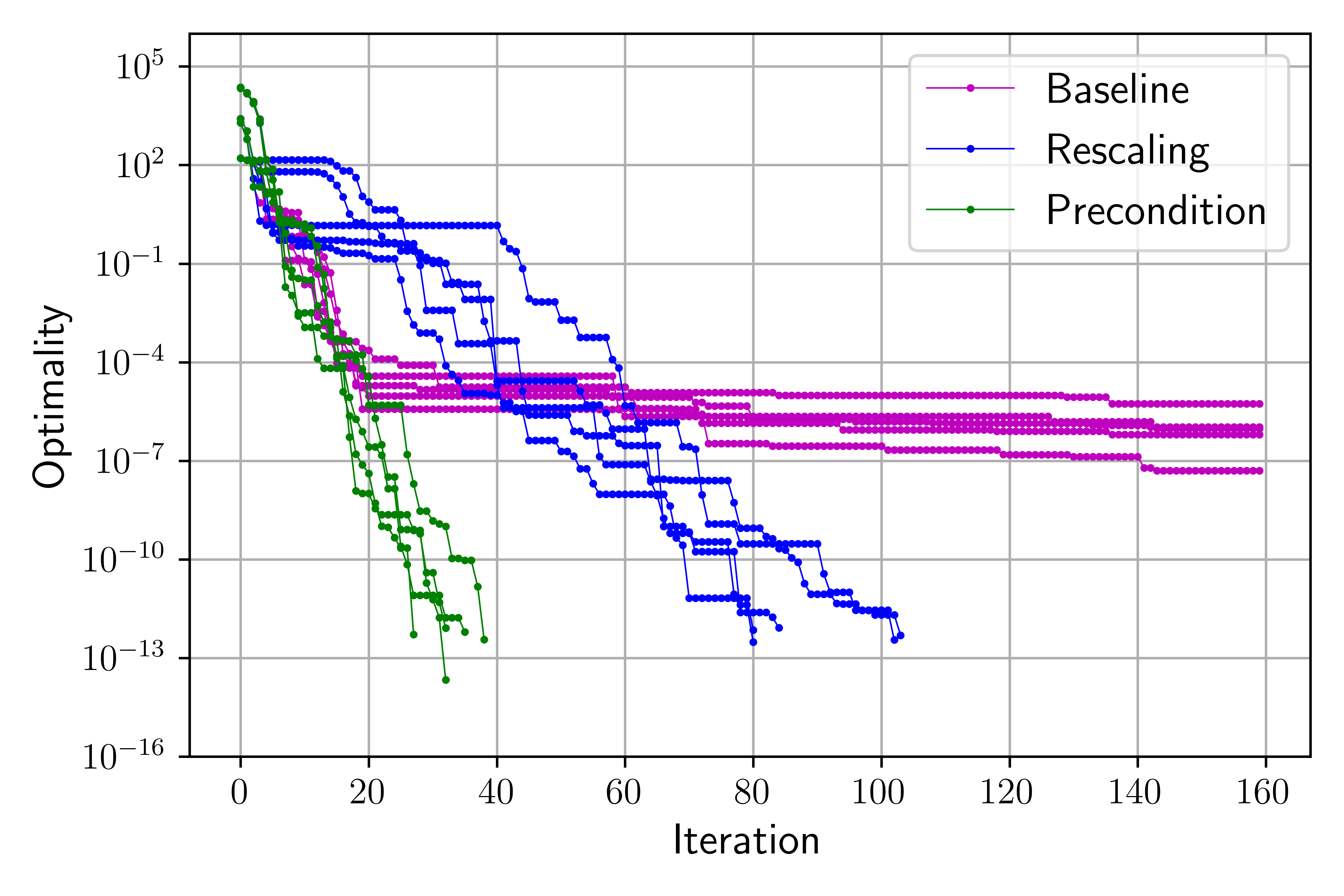}
	\caption{$d=2$}
	\label{Fig_optz_Rosen_opt_dim2}
\end{subfigure}
 \begin{subfigure}[t]{0.49\textwidth}
 	\includegraphics[width=\textwidth]{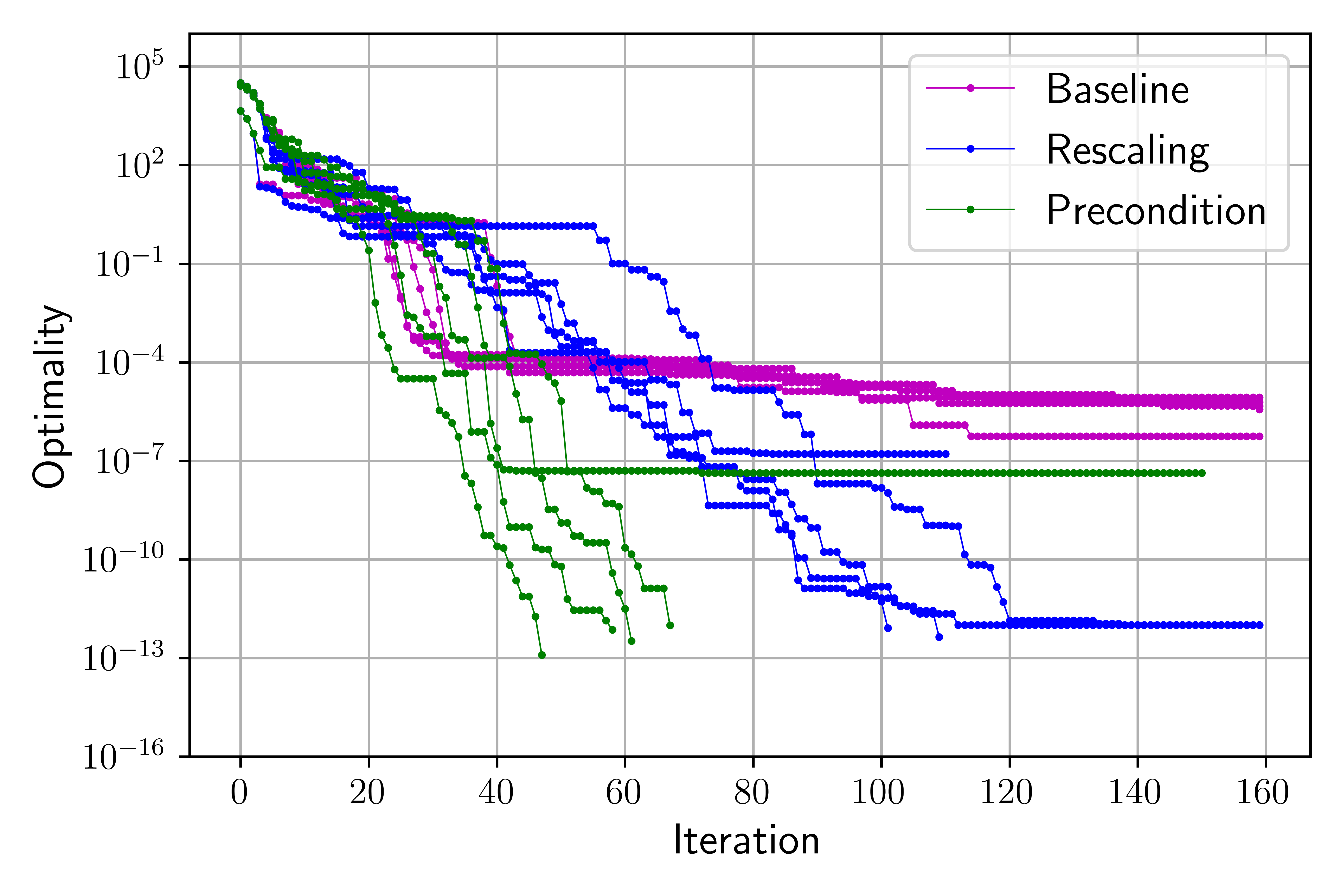}
	\caption{$d=5$}
	\label{Fig_optz_Rosen_opt_dim5}
\end{subfigure}
\hspace{0.1cm}
\begin{subfigure}[t]{0.49\textwidth}
	\includegraphics[width=\textwidth]{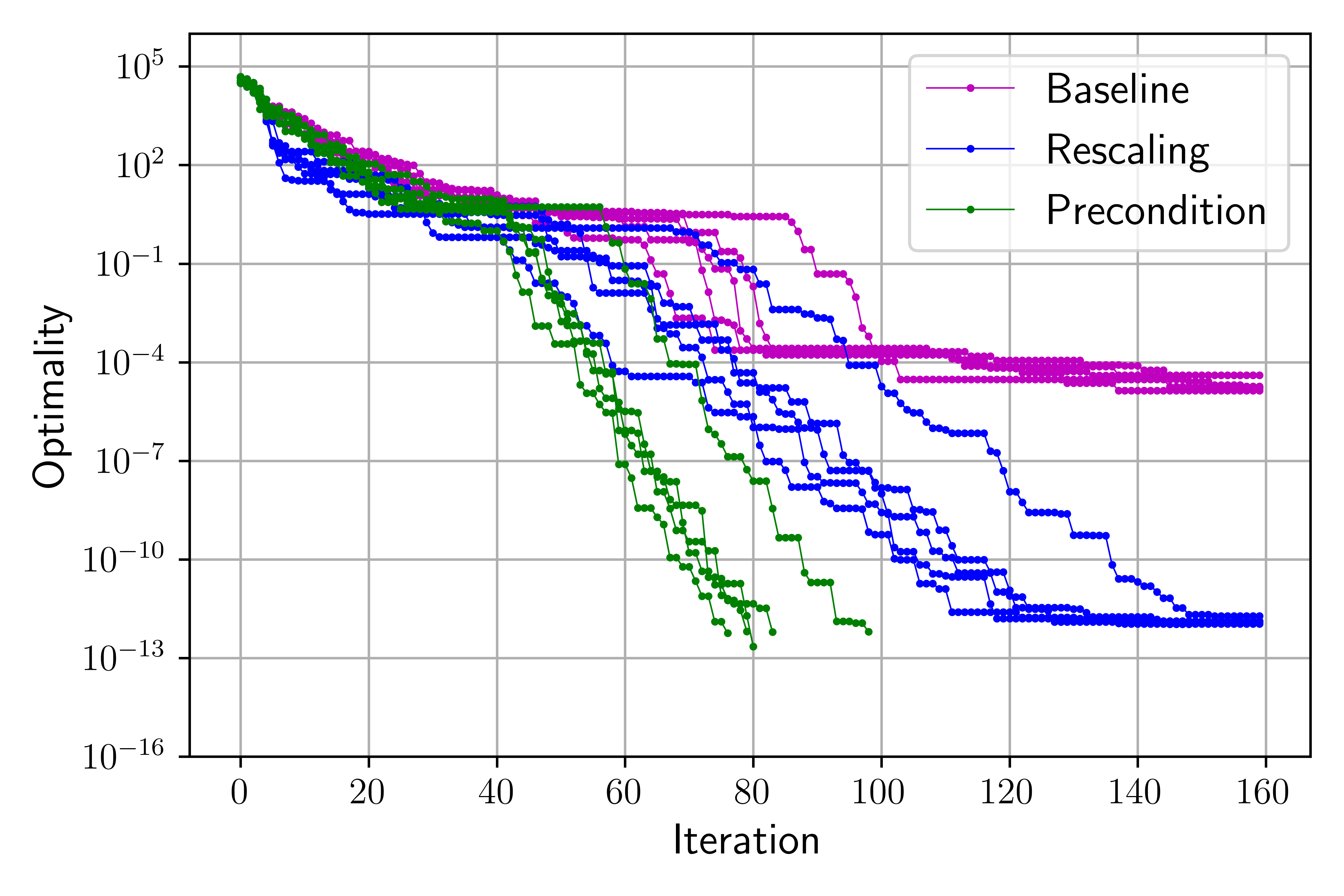}
	\caption{$d=10$}
	\label{Fig_optz_Rosen_opt_dim10}
\end{subfigure}	
%
\begin{subfigure}[t]{0.49\textwidth}
	\includegraphics[width=\textwidth]{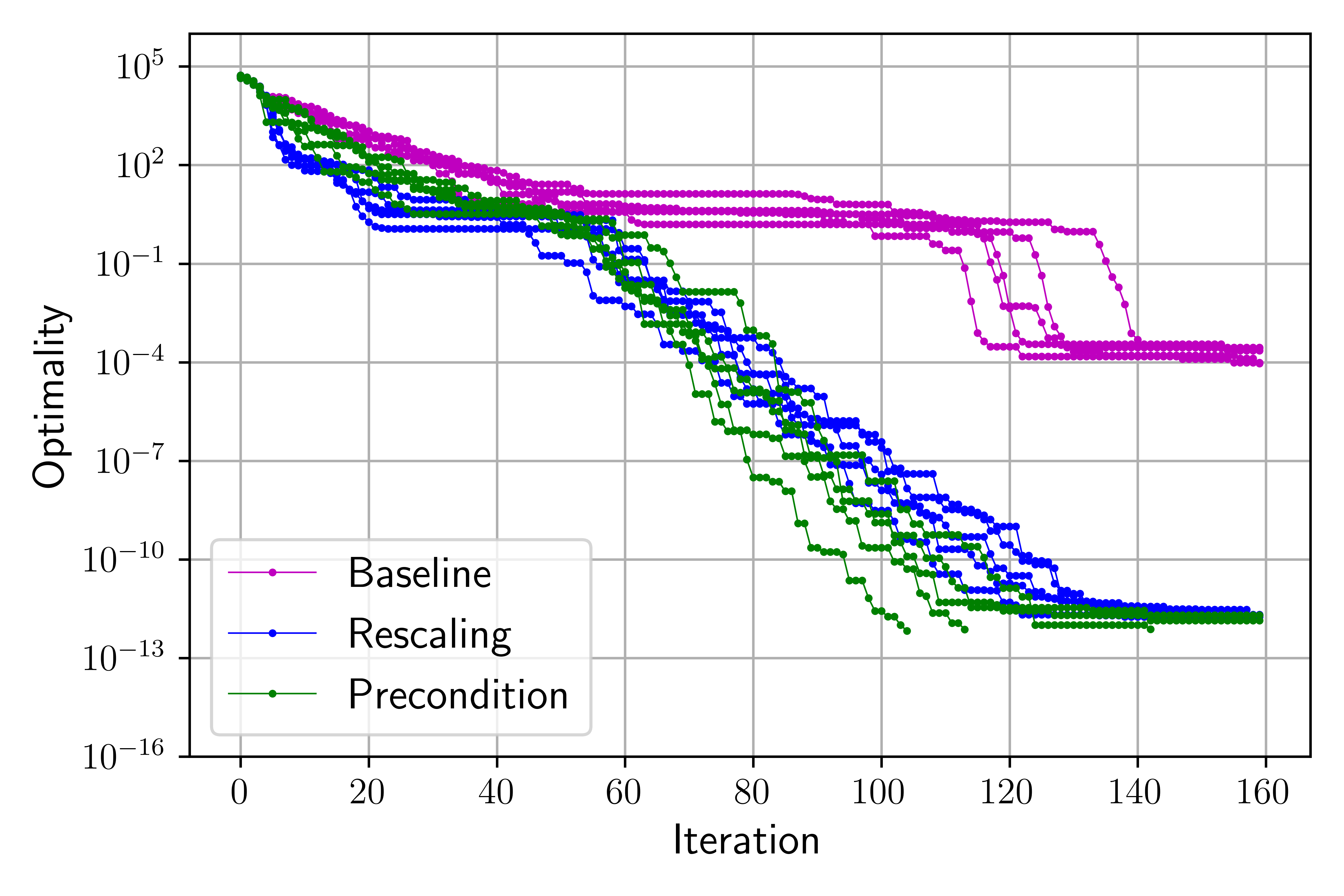}
	\caption{$d=15$}
	\label{Fig_optz_Rosen_opt_dim15}
\end{subfigure}
\caption{Bayesian optimization of the Rosenbrock function from \Eq{Eq_Rosenbrock} using the baseline, the rescaling, and preconditioning methods for $d \in \{2, 5, 10, 15 \}$. The plots show the lowest evaluated optimality, \ie the $\ell_2$ norm of the gradient of \Eq{Eq_Rosenbrock}, for each optimization run at each iteration.}
\label{Fig_optz_Rosen}
\end{figure}

In \Fig{Fig_optz_Rosen} the optimality, which is the $\ell_2$ norm of the gradient, is compared for the Bayesian optimizer using the baseline, rescaling, and preconditioning methods. There are two important observations from these optimality plots: the depth and rate of convergence of the optimality. In all cases, the rescaling and preconditioning methods converge the optimality several orders of magnitude deeper than the baseline method. In fact, the rescaling and preconditioning methods converge the optimality to below $10^{-12}$ in all test cases. Meanwhile, the deepest optimality that the baseline method achieves is $10^{-7}$, and only for $d=2$. As the dimensionality increases, the optimizer with the baseline method is not able to converge the optimality as deeply and can only achieve an optimality of $10^{-4}$ for the $d=15$ case. The optimizer with the rescaling and preconditioning methods is thus able to converge the optimality 5 to 9 additional orders of magnitude relative to the optimizer with the baseline method. For the baseline method, the hyperparameters $\gammavec$ are selected by solving \Eq{Eq_optz_lkd}, where the marginal log-likelihood is maximized with an upper bound on the condition number. As the optimality is converged, the evaluation points get closer together in the parameter space and this makes the ill-conditioning of the gradient-enhanced covariance matrix worse \cite{marchildon_non-intrusive_2023}. Consequently, solving \Eq{Eq_optz_lkd} results in hyperparameters that provide a lower marginal log-likelihood since the upper bound on the condition number becomes a more onerous constraint. The rescaling and baseline methods do not suffer from this since, by construction, they guarantee that the selected hyperparameters maximize the marginal log-likelihood without being constrained by the condition number of the covariance matrix.

It is clear from \Fig{Fig_optz_Rosen} that the optimizer utilizing the preconditioning method achieves the fastest rate of convergence of the three methods for all four test cases. The convergence of the optimality for the optimizer with the rescaling method is significantly slower relative to the optimizer with the baseline and preconditioning methods, particularly for the cases with $d=2$ and $d=5$. The slower convergence of the optimality for the optimizer using the rescaling method was a trend that was also observed in \cite{marchildon_non-intrusive_2023}. This trend was found to be a consequence of the rescaling method providing a surrogate with gradients that have larger errors relative to the baseline method. 

In summary, the use of the preconditioning method with a gradient-enhanced Bayesian optimizer enables the optimality to be converged more deeply than with the use of the baseline method, and in fewer iterations than with the rescaling method.


\section{Conclusions} \label{Sec_Conclusions}

\begin{table}[t]
\centering
\begin{tabular}{l c c c} 
Method & Baseline & Rescale & Precondition \\ [0.5ex] 
 \hline 
 $\kappa(\Sigmag(\gammavec)) \leq \condmax, \forall \gammavec > 0$ & \xmark & $\gamma_1 = \ldots = \gamma_d$ & \cmark \\ 
 Constraint free hyperparameter optz & \xmark & \xmark & \cmark \\
 Nodes can be collocated & \cmark & \xmark & \cmark \\
 Deep convergence: optimality $< 10^{-8}$ & \xmark & \cmark & \cmark \\
 Provides a correlation matrix & \xmark & \xmark & \cmark \\
 Bounded $\kappa(\Sigmagtil(\gammavec))$ for other kernels & \xmark & \xmark & \cmark \\
\end{tabular}
\caption{Comparison of methods to address the ill-conditioning of the covariance matrix $\Sigmag$. The baseline and rescaling methods are summarized in \Secs{Sec_WellCondMtd_Baseline}{Sec_WellCondMtd_Rescale}, respectively, and the implementation of the preconditioning method is provided in \Sec{Sec_Implementation}.}
\label{Table_summary_methods}
\end{table}

A gradient-enhanced GP provides a more accurate probabilistic surrogate than its gradient-free counterpart but the ill-conditioning of its covariance matrix has been a hindrance to its use. A straightforward method has been developed that overcomes this problem and ensures that the condition number of the gradient-enhanced covariance matrix is always smaller than the user-set threshold of $\condmax$. The simple implementation is detailed in Algorithm \ref{Alg_Stable_Cho}, which is found in \Sec{Sec_Implementation}. The method applies a diagonal preconditioner along with a modest nugget that scales as $\etaKgtil = \mathcal{O}(n_x \sqrt{d})$ for the Gaussian kernel, and $\etaKgtil = \mathcal{O}(n_x d)$ for other kernels. A tighter bound for $\etaKgtil$ for the non-Gaussian kernels will be considered in future work. 

The benefits of using the preconditioning method relative to the baseline and rescaling methods are summarized in \Table{Table_summary_methods}. With the preconditioning method, all of the data points can be kept and there is no minimum distance requirement between evaluation points in the parameter space. The points can even be collocated, unlike the rescaling method. Since the preconditioning method ensures that $\bSigmagtil$, no constraint is required when maximizing the marginal log-likelihood. This simplifies the optimization and reduces its computational cost. The preconditioning method also provides a correlation matrix, which makes the GP easier to interpret.

In \Sec{Sec_Results_Optz} the Rosenbrock function was optimized for $d \in \{2, 5, 10, 15 \}$ with a Bayesian optimizer using the baseline, rescaling and preconditioning methods. The Bayesian optimizer with the preconditioning method converged the optimality an additional 5-9 orders of magnitude relative to the optimizer with the baseline method. Furthermore, the preconditioning method enabled the Bayesian optimizer to converge the optimality more quickly than when the rescaling method is used, particularly for the lower-dimensional problems. The slower convergence of a Bayesian optimizer using the rescaling method was previously identified to be the result of its surrogate having gradients with larger errors. In conclusion, the preconditioning method bounds the condition number of the preconditioned gradient-enhanced covariance matrix and it enables a Bayesian optimizer to achieve deeper and faster convergence relative to the use of either the baseline or rescaling methods.

The baseline, rescaling, and preconditioning methods are all available in the open source python library GpGradPy, which can be found at \url{https://github.com/marchildon/gpgradpy/tree/paper_precon}. All of the figures in this paper can be reproduced with this library.

\appendix


\section{Proofs} \label{Sec_ApxProof}


\subsection{Proof for \Prop{Prop_urb}} \label{Sec_ApxProof_Prop_urb}

The derivation of the upper bound for the sum of the absolute value of the off-diagonal entries is the same for any of the last $n_x d$ rows of $\Kgtil$, which comes from \Eq{Eq_Kgtil}. Without loss of generality, we consider the $b$-th row of $\Kgtil$, where $b=p n_x + m$, and $p$ and $m$ can take any integer values that satisfy $1 \leq p \leq d$ and $1 \leq m \leq n_x$:
\begin{align*}
	\sum_{\substack{i=1 \\ i\neq b}}^{n_x} \left( \Kgtil \right)_{bi}
		&= \sum_{\substack{i=1 \\ i \neq m}}^{n_x} 
		\left( \left| \xtil_{m p} - \xtil_{i p} \right| 
			+ \sum_{j=1}^{d} \left| \delta_{j p} - \left( \xtil_{m p} - \xtil_{i p} \right) \left( \xtil_{m j} - \xtil_{i j} \right) \right| \right) e^{-\frac{ \| \xvectil_{m :} - \xvectil_{i:} \|_2^2}{2}} \\
		& \leq \sum_{\substack{i=1 \\ i \neq m}}^{n_x} \left( \left| \xtil_{m p} - \xtil_{i p} \right|
			+ 1 + \sum_{\substack{j= 1\\ j \neq p}}^{d} \left| \left( \xtil_{m p} - \xtil_{i p} \right) \left( \xtil_{m j} - \xtil_{i j} \right) \right| \right) e^{-\frac{ \| \xvectil_{m :} - \xvectil_{i:} \|_2^2}{2}} \\
		&= \sum_{\substack{i=1 \\ i \neq m}}^{n_x} \left( 1 + \left| \xtil_{m p} - \xtil_{i p} \right| \left(1 +
		\sum_{\substack{j= 1\\ j\neq p}}^{n_x} \left| \xtil_{m j} - \xtil_{i j} \right| \right) \right) e^{-\frac{ \| \xvectil_{m :} - \xvectil_{i:} \|_2^2}{2}} \\
		&\leq (n_x - 1) \max_{\nu \geq 0, \, \check{\wvec} \geq 0} \left(1 + \nu \left( 1 + \check{\one}_p^\top \check{\wvec} \right) \right) e^{-\frac{1}{2} \left(\nu^2 + \check{\wvec}^\top \check{\wvec} \right) }, \yesnumber \label{Eq_urb_max}
\end{align*} 
where $\nu = |\xtil_{m p} - \xtil_{i p} |$ and $\check{w}_j = |\xtil_{m j} - \xtil_{i j} |$, except for $\check{w}_p = 0$. Similarly $\check{\one}_p$ is a vector of ones of length $d$ with a zero at its $p$-th entry. The first inequality is a result of $\Kgtil$ being a correlation matrix, as explained in \Sec{Sec_GpNew}, and therefore $|\pp{\K}{\xtil_a}{\ytil_a}| \leq 1$. 

An analogous approach to the one taken in \Prop{Prop_ura} can be used to show that the maximization of \Eq{Eq_urb_max} requires $\check{\wvec} = \alpha \check{\one}_p$, \ie that all but the $p$-th entries in $\check{\wvec}$ are equal. Using $\check{\wvec} = \alpha \check{\one}_p$ with \Eq{Eq_urb_max} gives
\begin{equation} \label{Eq_urb_g1}
	g_1(\nu, \alpha; d) 
		= \left(\nu + 1 + (d-1) \alpha \nu \right) e^{-\frac{\nu^2 + (d-1) \alpha^2}{2}}.
\end{equation}
We thus need to prove that $(n_x - 1 ) g_1(\nu, \alpha; d) < u_{\text{G}}(n_x,d)$ for $\nu, \alpha \geq 0$ and $d \in \mathbb{Z}^+$. The following lemma considers the case for $d=1$.

\begin{lemma} \label{Lem_g1_max_d1}
	For $d=1$ we have 
\begin{equation} \label{Eq_urb_d1}
	(n_x - 1) \max_{\nu \geq 0, \alpha \geq 0} g_1(\nu, \alpha; d=1) = u_{\text{G}}(n_x,d=1) = (n_x - 1) \left( 1 + \sqrt{5} \right) e^{-\frac{3 - \sqrt{5}}{4}} ,
\end{equation}
where $g_1(\nu, \alpha; d)$ comes from \Eq{Eq_urb_g1} and $u_{\text{G}}(n_x,d)$ comes from \Eq{Eq_ub_Kgtil_G}.

\end{lemma}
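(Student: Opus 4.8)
The plan is to exploit the fact that for $d=1$ the two-variable optimization in \Eq{Eq_urb_d1} collapses to a one-variable one. Setting $d=1$ in \Eq{Eq_urb_g1} annihilates every term carrying the factor $(d-1)$, so that $g_1(\nu,\alpha;1) = (1+\nu)e^{-\nu^2/2}$ no longer depends on $\alpha$, and the claim reduces to the elementary maximization $\max_{\nu \geq 0}(1+\nu)e^{-\nu^2/2}$ followed by a direct comparison with \Eq{Eq_ub_Kgtil_G} evaluated at $d=1$.

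First I would locate the maximizer. Differentiating gives $\frac{d}{d\nu}\left[(1+\nu)e^{-\nu^2/2}\right] = (1-\nu-\nu^2)\,e^{-\nu^2/2}$, whose only nonnegative zero is the positive root $\nu^* = \frac{\sqrt{5}-1}{2}$ of $\nu^2+\nu-1=0$. Since $1-\nu-\nu^2 > 0$ on $[0,\nu^*)$ and $1-\nu-\nu^2 < 0$ on $(\nu^*,\infty)$, the function rises and then falls on the half-line; combined with the values $(1+\nu)e^{-\nu^2/2} = 1$ at $\nu = 0$ and $(1+\nu)e^{-\nu^2/2} \to 0$ as $\nu \to \infty$, this shows that $\nu^*$ is the \emph{global} maximizer on $[0,\infty)$, not merely a local one.

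Next I would evaluate $g_1$ at $\nu^*$. Using the defining identity $(\nu^*)^2 = 1-\nu^*$ to remove the square from the exponent, we get $1 + \nu^* = \frac{1+\sqrt{5}}{2}$ and $\frac{1}{2}(\nu^*)^2 = \frac{1}{2}(1-\nu^*) = \frac{3-\sqrt{5}}{4}$, so that $\max_{\nu\geq 0,\,\alpha\geq 0} g_1(\nu,\alpha;1) = \frac{1+\sqrt{5}}{2}\,e^{-\frac{3-\sqrt{5}}{4}}$. Substituting $d=1$ into \Eq{Eq_ub_Kgtil_G} gives $\sqrt{1+4d} = \sqrt{5}$ and exponent $\frac{1+2d-\sqrt{1+4d}}{4d} = \frac{3-\sqrt{5}}{4}$, so $(n_x-1)\,\max g_1(\nu,\alpha;1)$ matches $u_{\text{G}}(n_x,1)$ exactly, which establishes \Eq{Eq_urb_d1}.

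There is no real obstacle here: the lemma is a self-contained one-dimensional calculation. The only point deserving an explicit sentence is the upgrade from a local to a global maximum on the half-line $[0,\infty)$, which is covered by the sign analysis of the derivative together with the two limiting values. The reason the $d=1$ case is isolated is that it is precisely the regime where the bound $(n_x-1)\,g_1(\nu,\alpha;d) \leq u_{\text{G}}(n_x,d)$ holds with equality; the strict inequality for $d \geq 2$, handled in the remainder of the proof of \Prop{Prop_urb}, must then use the slack produced by the $\alpha$-dependent terms that are absent in the $d=1$ case.
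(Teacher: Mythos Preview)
Your proof is correct and follows essentially the same route as the paper: observe that the $(d-1)$ factors kill the $\alpha$-dependence when $d=1$, reduce to the one-variable problem $\max_{\nu\geq 0}(1+\nu)e^{-\nu^2/2}$, solve $\nu^2+\nu-1=0$ for the positive root $\nu^*=\frac{\sqrt5-1}{2}$, and evaluate. Your sign-change argument together with the boundary values is a slightly more explicit justification that $\nu^*$ is the global maximizer than the paper's ``straightforward to verify'' remark, but the approach is identical.
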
 

\begin{proof}
{
For $d=1$ the parameter $\alpha$ cancels out and we thus have a scalar function that we seek to maximize
\begin{align*}
	\p{g_1(\nu;d=1)}{\nu} 
		&= \p{\left( \left(\nu + 1 \right) e^{-\frac{\nu^2}{2}} \right)}{\nu} \\
		&= -\left( \nu^2 + \nu -1\right) e^{-\frac{\nu^2}{2}} = 0 \\
	\nu^*_{d=1} 
		&= \frac{-1 + \sqrt{5}}{2},
\end{align*}
where only the positive root was kept since $\nu \geq 0$ and it is straightforward to verify that this critical point maximizes $g_1(\nu, \alpha; d=1)$. \Eq{Eq_urb_d1} is recovered by evaluating $g_1$ with $\nu = \nu^*_{d=1}$ and $d=1$, which completes the proof.
}
\end{proof}

To consider the cases for $d \geq 2$ we will need to find the value of $\alpha$ and $\nu$ that maximize $g_1(\nu, \alpha; d)$ from \Eq{Eq_urb_g1}. The following lemma considers the maximization of $g_1$ with respect to $\alpha$.

\begin{lemma} \label{Lem_g2_geq_g1}
	For $\nu, \alpha \geq 0$ and $d \geq 2$ we have $g_1(\nu, \alpha; d) \leq g_2(\nu; d)$, where $g_1$ comes from \Eq{Eq_urb_g1} and 
\begin{equation} \label{Eq_urb_g2}
	g_2(\nu; d)
		= \left( \frac{\nu + 1 + \sqrt{h_1(\nu;d)}}{2} \right) e^{-\frac{\nu^2}{2} + h_2(\nu;d)},
\end{equation}
where 
\begin{align} 
	h_1(\nu;d) &= (\nu+1)^2 + 4 \nu^2 (d-1) \label{Eq_urb_h1} \\
	h_2(\nu;d) &= \frac{(\nu + 1) \sqrt{h_1(\nu;d)} - (\nu+1)^2}{4 \nu^2 (d-1)} - \frac{1}{2}. \label{Eq_urb_h2}
\end{align}
	
\end{lemma}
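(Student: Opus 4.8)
The plan is to fix $\nu \geq 0$ and $d \geq 2$ and maximize $g_1(\nu,\alpha;d)$ over $\alpha \geq 0$, then show that the maximal value is exactly $g_2(\nu;d)$; since $g_1(\nu,\alpha;d) \leq \max_{\alpha \geq 0} g_1(\nu,\alpha;d)$ trivially, this proves the lemma. Writing $g_1(\nu,\alpha;d) = (A + B\alpha)\, e^{-\frac{\nu^2}{2}}\, e^{-\frac{(d-1)\alpha^2}{2}}$ with $A = \nu+1$ and $B = (d-1)\nu$, only the factor $\phi(\alpha) = (A + B\alpha)\,e^{-\frac{(d-1)\alpha^2}{2}}$ depends on $\alpha$. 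First I would dispatch the degenerate case $\nu = 0$: then $B = 0$, so $\phi$ is maximized at $\alpha = 0$ with value $1$, and a short Taylor expansion of $\sqrt{h_1(\nu;d)}$ about $\nu = 0$ (which gives $h_1 = 1 + 2\nu + (4d-3)\nu^2 + \cdots$, hence $(\nu+1)\sqrt{h_1} - (\nu+1)^2 = 2(d-1)\nu^2 + \cdots$ and therefore $h_2(0;d) = 0$) shows $g_2(0;d) = 1$ as well, so the claimed inequality holds with equality.

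For $\nu > 0$ (so $A,B>0$) I would differentiate: $\phi'(\alpha) = e^{-\frac{(d-1)\alpha^2}{2}}\bigl( B - (d-1)\alpha(A + B\alpha)\bigr)$, so interior critical points solve $(d-1)B\alpha^2 + (d-1)A\alpha - B = 0$. The product of the roots of this quadratic is $-1/(d-1) < 0$, so it has exactly one nonnegative root $\alpha^\ast$; and since $\phi'(0) = B > 0$ while $\phi(\alpha) \to 0$ as $\alpha \to \infty$, the sign of $\phi'$ changes from positive to negative at $\alpha^\ast$, so $\alpha^\ast$ is the global maximizer of $\phi$ on $[0,\infty)$. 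Substituting $A = \nu+1$, $B = (d-1)\nu$, the discriminant collapses to $(d-1)^2 h_1(\nu;d)$ with $h_1$ from \Eq{Eq_urb_h1}, giving $\alpha^\ast = \dfrac{\sqrt{h_1(\nu;d)} - (\nu+1)}{2(d-1)\nu}$, which is nonnegative exactly because $h_1 \geq (\nu+1)^2$ for $d \geq 1$.

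Next I would substitute $\alpha^\ast$ back into $\phi$. For the polynomial prefactor, $A + B\alpha^\ast = (\nu+1) + \tfrac{1}{2}\bigl(\sqrt{h_1} - (\nu+1)\bigr) = \tfrac{1}{2}\bigl((\nu+1) + \sqrt{h_1}\bigr)$, which matches the prefactor of $g_2$ in \Eq{Eq_urb_g2}. For the exponent, I would expand $(\alpha^\ast)^2$, use the identity $h_1 + (\nu+1)^2 = 2(\nu+1)^2 + 4(d-1)\nu^2$ to simplify the numerator, and verify that $-\tfrac{(d-1)}{2}(\alpha^\ast)^2$ reduces to precisely $h_2(\nu;d)$ from \Eq{Eq_urb_h2}; combined with the $e^{-\nu^2/2}$ factor this reproduces $g_2(\nu;d)$ exactly. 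Hence $g_1(\nu,\alpha;d) \leq g_2(\nu;d)$ for all $\nu,\alpha \geq 0$ and $d \geq 2$, with equality at $\alpha = \alpha^\ast$.

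The main obstacle is purely the algebraic bookkeeping in the last step — confirming that the exponent of the maximized $g_1$ equals the somewhat opaque expression $h_2(\nu;d)$ — together with the minor care needed to treat $\nu = 0$ as a limit, where the $\nu^2$ in the denominator of $h_2$ vanishes. No genuinely hard analytic input is required; the argument is a one-variable concavity-type optimization followed by an identity check.
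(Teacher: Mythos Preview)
Your proposal is correct and follows essentially the same approach as the paper: maximize $g_1$ over $\alpha$ by solving $\partial g_1/\partial \alpha = 0$, keep the positive root $\alpha^\ast$, and identify $g_1(\nu,\alpha^\ast;d)$ with $g_2(\nu;d)$. If anything you supply more justification than the paper does, explicitly handling the $\nu=0$ limit, arguing via the sign of the root product and $\phi'(0)>0$ that $\alpha^\ast$ is the global maximizer, and outlining the algebraic verification that the substituted exponent equals $h_2$.
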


\begin{proof}

To find the maximum of $g_1(\nu, \alpha;d)$ with respect to $\alpha$ we find its derivative, set it to zero and solve for $\alpha$:
\begin{align*} 
	\p{g_1(\nu, \alpha; d)}{\alpha} 
		&= -(d-1) \left( (d-1) \nu \alpha^2 + (\nu+1) \alpha - \nu \right) e^{-\frac{\nu^2 + (d-1) \alpha^2}{2}} = 0 \\
	\alpha^*
		&= \frac{-(\nu + 1) + \sqrt{(\nu + 1)^2 + 4 \nu^2 (d-1) }}{2 \nu (d-1)},
\end{align*} 
where only the positive root of the quadratic equation is kept since $\alpha$ must be positive and it is straightforward to verify that this provides the maximum of $g_1(\nu, \alpha; d)$. The function $g_2(\nu;d)$ from \Eq{Eq_urb_g2} is recovered by evaluating $g_1(\nu, \alpha^*;d)$, which completes the proof.

\end{proof}

Both $h_1(\nu;d)$ and $h_2(\nu;d)$ from \Eqs{Eq_urb_h1}{Eq_urb_h2}, respectively, are non-polynomial functions that make it impractical to find a closed-form maximum solution for $g_2(\nu;d)$. The two following lemmas provide upper bounds for these non-polynomial functions. 

\begin{lemma} \label{Lem_ub_h1}

For $\nu \geq 0$ and $d \geq 2$ we have the bound $\sqrt{h_1(\nu;d)} \leq h_3(\nu;d)$, where $h_1(\nu;d)$ comes from \Eq{Eq_urb_h1} and $h_3(\nu;d)$ is the following $\mathbb{C}^0$ continuous piecewise polynomial:
\begin{align}
	h_3(\nu;d) =
	\begin{cases}
		(2 \sqrt{d} -1) \nu +1 	& \text{if } 0 \leq \nu \leq 1 \\ 
		2 \sqrt{d} \nu 				& \text{if } \nu > 1. \label{Eq_urb_h3}
	\end{cases}
\end{align}

\end{lemma}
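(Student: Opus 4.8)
The plan is to establish the bound separately on the two branches $0 \le \nu \le 1$ and $\nu > 1$, in each case squaring the desired inequality to reduce it to an elementary polynomial inequality. Squaring is legitimate because both sides are nonnegative for $\nu \ge 0$ and $d \ge 2$: the left side $\sqrt{h_1(\nu;d)}$ is a square root, while on the first branch $(2\sqrt{d}-1)\nu + 1 \ge 1$ since $2\sqrt{d} > 1$, and on the second branch $2\sqrt{d}\,\nu > 0$.

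For the branch $\nu > 1$, I would substitute $h_1(\nu;d) = (\nu+1)^2 + 4\nu^2(d-1)$ from \Eq{Eq_urb_h1} into the squared inequality $h_1(\nu;d) \le 4d\nu^2$. The terms $4\nu^2(d-1)$ and $4d\nu^2$ partially cancel, and the claim collapses to $(\nu+1)^2 \le 4\nu^2$, i.e.\ $\nu + 1 \le 2\nu$, which holds precisely for $\nu \ge 1$ and hence on this branch. For the branch $0 \le \nu \le 1$, I would instead compute the difference of squares $\bigl((2\sqrt{d}-1)\nu + 1\bigr)^2 - h_1(\nu;d)$; expanding, the constant terms cancel and the remaining quadratic in $\nu$ simplifies to $4(\sqrt{d}-1)\,\nu\,(1-\nu)$, which is nonnegative for $d \ge 2$ and $0 \le \nu \le 1$. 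This yields $\sqrt{h_1(\nu;d)} \le (2\sqrt{d}-1)\nu + 1$ on the first branch.

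Finally I would check that the two pieces agree at $\nu = 1$, where both evaluate to $2\sqrt{d}$, confirming that $h_3(\nu;d)$ from \Eq{Eq_urb_h3} is $\mathbb{C}^0$ continuous as stated, so that the two cases together give $\sqrt{h_1(\nu;d)} \le h_3(\nu;d)$ for all $\nu \ge 0$ and $d \ge 2$. The argument is elementary throughout; the only points needing a little care are verifying nonnegativity of the right-hand side before squaring (so the reduction to polynomial inequalities is an equivalence) and tracking the cancellations in the difference-of-squares computation. I do not anticipate a genuine obstacle.
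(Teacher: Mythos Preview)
Your proposal is correct and follows essentially the same route as the paper: square both sides on each branch, reduce to an elementary polynomial inequality (the paper obtains $4\nu(1-\nu)(\sqrt{d}-1)\ge 0$ on $[0,1]$ and factors $3\nu^2-2\nu-1=(\nu+\tfrac13)(\nu-1)\cdot 3\ge 0$ for $\nu\ge 1$, while you equivalently take $\nu+1\le 2\nu$), and then check continuity at $\nu=1$. Your explicit justification of nonnegativity before squaring is a small refinement the paper leaves implicit.
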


\begin{proof}
{ 
For $0 \leq \nu \leq 1$ and $d \geq 2$ we start by showing that $h_3^2 \geq h_1$:
\begin{align*}
	\left( (2 \sqrt{d} -1) \nu + 1 \right)^2 
		& \geq (\nu + 1)^2 + 4 \nu^2 (d-1) \\
	4 \nu (1 - \nu) (\sqrt{d} - 1)
		& \geq 0.
\end{align*}
Next we demonstrate that $h_3^2 \geq h_1(\nu;d)$ for $\nu \geq 1$:
\begin{align*}
	\left( 2 \sqrt{d} \nu \right)^2
		& \geq (\nu + 1)^2 + 4 \nu^2 (d-1) \\
	\left( \nu + \frac{1}{3} \right) \left( \nu -1 \right)
		& \geq 0. 
\end{align*}
Finally, it is straightforward to verify that $h_3(\nu;d)$ is $\mathbb{C}^0$ continuous:
\begin{equation}
	\lim_{\nu \rightarrow 1^{-}} h_3(\nu;d) = \lim_{\nu \rightarrow 1^{+}} h_3(\nu;d) = 2 \sqrt{d},
\end{equation}
which completes the proof. 
}
\end{proof}

\begin{lemma} \label{Lem_ub_h2}
The maximum value for $h_2(\nu;d)$ from \Eq{Eq_urb_h2} for $\nu \geq 0$ and $d \in \mathbb{Z}^+$ is
\begin{equation} \label{Eq_max_exp_fun}
	\max_{\nu \geq 0} h_2(\nu;d) = \lim{\nu \rightarrow 0} h_2(\nu, d) = 0.
\end{equation}
\end{lemma}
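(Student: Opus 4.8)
The plan is to collapse the apparent $0/0$ behaviour of $h_2$ at $\nu=0$ by rationalizing the numerator, after which the whole statement reduces to a one-line inequality together with a trivial limit. First I would note, using $h_1(\nu;d)-(\nu+1)^2 = 4\nu^2(d-1)$ from \Eq{Eq_urb_h1}, that for $\nu>0$
\[
(\nu+1)\sqrt{h_1(\nu;d)} - (\nu+1)^2
= (\nu+1)\bigl(\sqrt{h_1(\nu;d)}-(\nu+1)\bigr)
= \frac{4\nu^2(d-1)\,(\nu+1)}{\sqrt{h_1(\nu;d)}+\nu+1}.
\]
Dividing by $4\nu^2(d-1)$ (positive for $\nu>0$ and $d\ge2$) and subtracting $\tfrac12$ yields the compact form
\[
h_2(\nu;d) \;=\; \frac{\nu+1}{\sqrt{h_1(\nu;d)}+\nu+1} \;-\; \frac12 ,
\]
whose right-hand side extends continuously to $\nu=0$; for $d=1$ one has $\sqrt{h_1}=\nu+1$ and this reads $h_2\equiv 0$, so that case is covered directly.

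Next I would prove $h_2(\nu;d)\le 0$ for all $\nu\ge 0$. From the compact form this is equivalent to $2(\nu+1)\le \sqrt{h_1(\nu;d)}+\nu+1$, i.e. $\nu+1\le \sqrt{h_1(\nu;d)}$, which is immediate since $h_1(\nu;d)=(\nu+1)^2+4\nu^2(d-1)\ge(\nu+1)^2$ and $\nu+1>0$; equality holds precisely when $\nu=0$. Then I would compute the limit, which in the compact form is painless: as $\nu\to 0$ we have $h_1(\nu;d)\to 1$, hence $h_2(\nu;d)\to \frac{1}{1+1}-\frac12 = 0$. Combining the two facts, $h_2$ is bounded above by $0$ on $[0,\infty)$ and attains the value $0$ in the limit $\nu\to 0$ (and, via the continuous extension, at $\nu=0$ itself), which is exactly \Eq{Eq_max_exp_fun}.

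The only point requiring care — and it is minor — is the bookkeeping at $\nu=0$, where the formula defining $h_2$ is literally of the form $0/0$; the rationalization step is precisely what removes this, so I would present it explicitly and state that all identities hold for $\nu>0$ before passing to the limit. Beyond that there is no real obstacle: the bound $h_2\le 0$ is just the elementary estimate $\sqrt{h_1}\ge\nu+1$, and I do not need any monotonicity of $h_2$ on $(0,\infty)$ for the stated maximum (although the same compact form would give it, since $\sqrt{h_1(\nu;d)}/(\nu+1)$ is increasing in $\nu$).
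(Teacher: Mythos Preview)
Your proof is correct and takes a genuinely different, more elementary route than the paper. The paper establishes the result in two steps: first it differentiates $h_2$ with respect to $\nu$ and argues that the derivative is nonpositive (requiring a sign analysis of the numerator $h_1 - (\nu+1)\sqrt{h_1} - 2\nu^2(d-1)$), concluding that $h_2$ is monotonically decreasing; second, it evaluates $\lim_{\nu\to 0} h_2$ by applying l'H\^opital's rule twice. Your rationalization collapses both steps: once you have the compact form $h_2(\nu;d) = (\nu+1)/(\sqrt{h_1}+\nu+1) - \tfrac12$, the inequality $h_2\le 0$ is just $\sqrt{h_1}\ge \nu+1$, and the limit at $\nu=0$ is read off directly with no l'H\^opital. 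Your argument is shorter, needs no calculus, and even handles the edge case $d=1$ explicitly (where the defining formula for $h_2$ has $d-1$ in the denominator); the paper's proof tacitly assumes $d\ge 2$ in that formula despite the statement allowing $d\in\mathbb{Z}^+$. The only additional information the paper's approach delivers is strict monotonicity of $h_2$ on $(0,\infty)$, which you rightly observe is not required for \Eq{Eq_max_exp_fun} and in any case follows from your compact form as well.
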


\begin{proof}
{
We start by proving that $h_2(\nu;d)$ is monotonically decreasing with respect to $\nu$ by showing that its derivative with respect to $\nu$ is nonpositive for $\nu \geq 0$ and $d \in \mathbb{Z}^+$
\begin{align*}
	\p{h_2(\nu;d)}{\nu} 
		&= -\frac{h_1 - (\nu +1) \sqrt{h_1} - 2 \nu^2 (d-1)}{2 \nu^3 \sqrt{h_1} (d-1)}.
\end{align*}
Since the denominator of $\p{h_2(\nu;d)}{\nu} $ is always nonnegative for $\nu \geq 0$ and $d \in \mathbb{Z}^+$, we only need to show that its numerator is nonpositive for the same range of parameters:
\begin{align*}
	h_1 - (\nu +1) \sqrt{h_1} - 2 \nu^2 (d-1) 
		&\geq 0 \\
	\left( h_1 - 2 \nu^2 (d-1) \right)^2 
		&\geq (\nu +1)^2 a \\
	4 \nu^4 (d-1)^2 
		&\geq 0.
\end{align*}
Since $h_2(\nu;d)$ is monotonically decreasing with respect to $\nu$ for $\nu \geq 0$ and $d \in \mathbb{Z}^+$, its maximum is at $\nu=0$. To evaluate $h_2(\nu;d)$ we use a limit and apply l'H\^opital's rule twice:
\begin{align*}
	\lim_{\nu \rightarrow 0} h_2(\nu;d)
		&= \lim_{\nu \rightarrow 0} \frac{(\nu+1) \sqrt{h_1} - (\nu+1)^2}{4(d-1) \nu^2} - \frac{1}{2} \\
		&= \lim_{\nu \rightarrow 0} \frac{2 \sqrt{h_1} + \frac{4 \nu (d-1)}{\sqrt{h_1}} - 2(\nu+1)}{8 \nu (d-1)} - \frac{1}{2} \\
		&= \lim_{\nu \rightarrow 0} \frac{\frac{4(\nu+1)(d-1)}{h_1^{\frac{3}{2}}} + \frac{(8d-6) \nu + 2}{\sqrt{h_1}} -2}{8(d-1)} - \frac{1}{2} \\
		&= 0,
\end{align*}
which completes the proof. 
}
\end{proof}

Thanks to \Lems{Lem_ub_h1}{Lem_ub_h2} it is now possible to derive a closed-form solution for an upper bound of $g_2(\nu;d)$ from \Eq{Eq_urb_g2} for $\nu \geq 0$ and $d \geq 2$. This is considered in the two following lemmas that consider the case for $0 \leq \nu \leq 1$ and $\nu > 1$, respectively. 

\begin{lemma} \label{Lem_g3_geq_g2}
	For $0 \leq \nu \leq 1$ and $d \geq 2$ we have $(n_x - 1) g_2(\nu;d) < u_{\text{G}}(n_x,d)$, where $g_2(\nu;d)$ and $u_{\text{G}}(n_x,d)$ come from \Eqs{Eq_urb_g2}{Eq_ub_Kgtil_G}, respectively.
\end{lemma}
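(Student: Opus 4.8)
The plan is to peel away the two non-polynomial pieces of $g_2$ using the bounds already established, reducing the problem to maximizing an elementary function over $[0,1]$. First I would invoke \Lem{Lem_ub_h2} to replace the exponent, $e^{-\frac{\nu^2}{2} + h_2(\nu;d)} \le e^{-\frac{\nu^2}{2}}$ with strict inequality for every $\nu>0$, and \Lem{Lem_ub_h1} to replace the radical, $\sqrt{h_1(\nu;d)} \le h_3(\nu;d) = (2\sqrt{d}-1)\nu + 1$ on the range $0 \le \nu \le 1$. Substituting both into the definition of $g_2$ in \Eq{Eq_urb_g2} collapses the prefactor:
\[
  g_2(\nu;d) \;\le\; \frac{\nu + 1 + (2\sqrt{d}-1)\nu + 1}{2}\, e^{-\frac{\nu^2}{2}} \;=\; \bigl(1 + \sqrt{d}\,\nu\bigr) e^{-\frac{\nu^2}{2}} \;=:\; \phi(\nu;d).
\]

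Next I would maximize $\phi(\cdot;d)$ over $[0,1]$. Its derivative is $\phi'(\nu;d) = \bigl(\sqrt{d} - \nu - \sqrt{d}\,\nu^2\bigr) e^{-\frac{\nu^2}{2}}$, and the bracket is a downward parabola in $\nu$ that is positive to the left of its unique nonnegative root $\nu^* = \frac{-1 + \sqrt{1+4d}}{2\sqrt{d}}$ and negative to the right; the elementary inequality $\sqrt{1+4d} < 1 + 2\sqrt{d}$ shows $\nu^* \in (0,1)$, so $\nu^*$ is the maximizer on $[0,1]$. Evaluating there, $1 + \sqrt{d}\,\nu^* = \tfrac{1}{2}\bigl(1 + \sqrt{1+4d}\bigr)$ and $(\nu^*)^2 = \tfrac{1}{2d}\bigl(1 + 2d - \sqrt{1+4d}\bigr)$, which gives
\[
  \max_{0 \le \nu \le 1} \phi(\nu;d) = \phi(\nu^*;d) = \frac{1 + \sqrt{1+4d}}{2}\, e^{-\frac{1 + 2d - \sqrt{1+4d}}{4d}} = \frac{u_{\text{G}}(n_x,d)}{n_x - 1},
\]
matching \Eq{Eq_ub_Kgtil_G} term for term.

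It remains to upgrade $\le$ to the strict $<$ claimed in the statement. For $\nu \in (0,1]$ the exponent bound from \Lem{Lem_ub_h2} is strict, so $g_2(\nu;d) < \phi(\nu;d) \le \phi(\nu^*;d)$. At the single point $\nu = 0$ there is no slack — $h_2(0;d)=0$ and $h_3(0;d) = \sqrt{h_1(0;d)} = 1$, so $g_2(0;d) = \phi(0;d) = 1$ — but since $\phi'(0;d) = \sqrt{d} > 0$ and $\nu^*$ is the unique interior critical point, $\phi(0;d) = 1 < \phi(\nu^*;d)$. In all cases $(n_x-1) g_2(\nu;d) < u_{\text{G}}(n_x,d)$.

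The main obstacle is not conceptual but bookkeeping: one must verify that the closed form of $\phi(\nu^*;d)$ coincides exactly with $u_{\text{G}}(n_x,d)$ from \Eq{Eq_ub_Kgtil_G}, and one must track the two independent sources of slack — the piecewise bound $h_3$ and the bound $h_2 \le 0$ — carefully enough to see that the final inequality stays strict at the endpoints $\nu = 0$ and $\nu = 1$, where $h_3$ touches $\sqrt{h_1}$ and contributes no slack of its own.
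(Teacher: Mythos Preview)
Your argument is essentially the paper's own proof: bound $h_2\le 0$ and $\sqrt{h_1}\le h_3$ to reduce to $g_3(\nu;d)=(1+\sqrt{d}\,\nu)e^{-\nu^2/2}$ (your $\phi$), maximize at $\nu^*=\frac{-1+\sqrt{1+4d}}{2\sqrt{d}}\in(0,1)$, and check that $(n_x-1)g_3(\nu^*;d)=u_{\text{G}}(n_x,d)$. You are in fact more careful than the paper about strictness at $\nu=0$, where both bounds are tight and $g_2(0;d)=\phi(0;d)=1$; the paper simply asserts $g_2<g_3$ on $[0,1]$, whereas you correctly recover the strict inequality from $\phi(0;d)<\phi(\nu^*;d)$.
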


\begin{proof}
	The function $g_2(\nu;d)$ from \Eq{Eq_urb_g2} contains the nonlinear functions $h_1(\nu;d)$ and $h_2(\nu;d)$ from \Eqs{Eq_urb_h1}{Eq_urb_h2}, respectively. We use the upper bounds provided by \Lems{Lem_ub_h1}{Lem_ub_h2} for these functions and $0 \leq \nu \leq 1$ to get $g_2(\nu;d) < g_3(\nu;d)$, where
\begin{align}
	g_3(\nu;d) 
		&= \frac{\nu + 1 + \left[(2 \sqrt{d} - 1) \nu + 1 \right]}{2} e^{-\frac{\nu^2}{2}} \nonumber \\
		&= \left(\sqrt{d} \nu + 1 \right) e^{-\frac{\nu^2}{2}}. \label{Eq_urb_g3} 
\end{align}
We now find the value of $\nu$ that maximizes $g_3(\nu;d)$
\begin{align*}
	\p{g_3}{\nu} 
		&= \left( \sqrt{d} - \nu \left( \sqrt{d} \nu +1 \right) \right) e^{-\frac{\nu^2}{2}} = 0 \\
	\nu_3^*
		&= \frac{-1 + \sqrt{1 + 4d}}{2 \sqrt{d}}, \yesnumber \label{Eq_nu_star_3}
\end{align*} 
where only the positive root was kept and it is straightforward to verify that $0 < \nu^* < 1$ for $d \geq 2$, and that this is the maximum for the function $g_3$. Using $\nu_3^*$ from \Eq{Eq_nu_star_3} gives $g_3(\nu_3^*; d) = u_{\text{G}}(n_x,d)$, where $u_{\text{G}}(n_x,d)$ comes from \Eq{Eq_ub_Kgtil_G}. Therefore, we have for $d \geq 2$:
\begin{equation}
	\max_{0 \leq \nu \leq 1} g_2(\nu; d) < \max_{0 \leq \nu \leq 1} g_3(\nu; d) = g_3(\nu_3^*; d) = u_{\text{G}}(n_x,d),
\end{equation}
which completes the proof.
	
\end{proof}

\begin{lemma} \label{Lem_g4_geq_g2}
	We have $(n_x - 1) g_2(\nu;d) < u_{\text{G}}(n_x,d)$ for $\nu \geq 1$ and $d \geq 2$, where $g_2(\nu;d)$ and $u_{\text{G}}(n_x,d)$ come from \Eqs{Eq_urb_g2}{Eq_ub_Kgtil_G}, respectively.
\end{lemma}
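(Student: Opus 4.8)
The plan is to follow the template of \Lem{Lem_g3_geq_g2}, but on the interval $\nu \geq 1$ we use the second branch of the piecewise bound $h_3$ from \Lem{Lem_ub_h1}. First I would invoke \Lems{Lem_ub_h1}{Lem_ub_h2}: for $\nu \geq 1$ and $d \geq 2$ we have $\sqrt{h_1(\nu;d)} \leq 2\sqrt{d}\,\nu$, while $h_2(\nu;d) \leq 0$, with the exponential factor in fact strictly reduced because $h_2(\nu;d) < 0$ for every $\nu > 0$ when $d \geq 2$. Substituting these into \Eq{Eq_urb_g2} gives $g_2(\nu;d) < g_4(\nu;d)$ for all $\nu \geq 1$, where
\begin{align*}
	g_4(\nu;d) = \frac{(1 + 2\sqrt{d})\,\nu + 1}{2}\, e^{-\frac{\nu^2}{2}}.
\end{align*}

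Next I would maximize $g_4$ over $\nu \geq 1$. Differentiating,
\begin{align*}
	\p{g_4(\nu;d)}{\nu} = \frac{1}{2}\Big( (1+2\sqrt{d}) - \nu\big((1+2\sqrt{d})\nu + 1\big) \Big) e^{-\frac{\nu^2}{2}},
\end{align*}
and the bracketed term equals $-1$ at $\nu = 1$; since the quadratic $(1+2\sqrt{d})\nu^2 + \nu - (1+2\sqrt{d})$ is increasing for $\nu \geq 0$, it stays positive on $[1,\infty)$, so $\p{g_4(\nu;d)}{\nu} < 0$ there. Hence $\max_{\nu \geq 1} g_4(\nu;d) = g_4(1;d) = (1+\sqrt{d})\,e^{-1/2}$, and it remains to show
\begin{align*}
	(n_x - 1)(1 + \sqrt{d})\, e^{-1/2} < u_{\text{G}}(n_x,d), \qquad d \geq 2.
\end{align*}

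For this last inequality I would cancel the common factor $n_x - 1$ and substitute $s = \sqrt{1 + 4d} \geq 3$, so that $\sqrt{d} = \tfrac{1}{2}\sqrt{s^2 - 1}$ and the exponent of $u_{\text{G}}$ in \Eq{Eq_ub_Kgtil_G} simplifies, via $4d = s^2-1$, to $\frac{1 + 2d - \sqrt{1+4d}}{4d} = \frac{(s-1)^2}{2(s^2-1)} = \frac{s-1}{2(s+1)}$. The claim then reduces to
\begin{align*}
	\frac{2 + \sqrt{s^2 - 1}}{1 + s} < e^{\frac{1}{s+1}},
\end{align*}
which closes in one line: since $\sqrt{s^2 - 1} < s$ the left-hand side is strictly below $1 + \frac{1}{1+s}$, whereas $e^{1/(s+1)} > 1 + \frac{1}{s+1}$ by the elementary estimate $e^x > 1 + x$ for $x > 0$; chaining the two gives the strict inequality for every $s > 1$, hence for all $d \geq 2$. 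The main obstacle is really this final comparison, because asymptotically both sides of the target inequality grow like $\sqrt{d}\,e^{-1/2}$, so a loose bound would not suffice; the substitution $s = \sqrt{1+4d}$ is what makes the estimate collapse to the clean two-line argument above. A minor technical point to watch is the passage from $g_2$ to $g_4$ at the join $\nu = 1$, where $\sqrt{h_1} = 2\sqrt{d}\,\nu$ holds with equality, so there the strictness must come from $h_2(1;d) < 0$ rather than from the $h_1$ bound; once that is checked the rest is routine calculus.
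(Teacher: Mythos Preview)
Your proof is correct and follows the same overall scaffolding as the paper's: bound $g_2$ by the same $g_4$ via \Lems{Lem_ub_h1}{Lem_ub_h2}, then show $g_4$ attains its maximum on $[1,\infty)$ at $\nu=1$. The difference lies entirely in the last step. The paper does not verify $(1+\sqrt{d})e^{-1/2} < u_{\text{G}}(n_x,d)/(n_x-1)$ directly; instead it exploits the $\mathbb{C}^0$ continuity of $h_3$ at $\nu=1$ (from \Lem{Lem_ub_h1}) to observe $g_4(1;d)=g_3(1;d)$, and then simply quotes \Lem{Lem_g3_geq_g2}, where it was already established that $\nu_3^*\in(0,1)$ is the unique maximizer of $g_3$ with $(n_x-1)g_3(\nu_3^*;d)=u_{\text{G}}(n_x,d)$, giving $(n_x-1)g_4(1;d)=(n_x-1)g_3(1;d)<(n_x-1)g_3(\nu_3^*;d)=u_{\text{G}}(n_x,d)$ immediately. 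Your substitution $s=\sqrt{1+4d}$ and the two-line chain $\frac{2+\sqrt{s^2-1}}{1+s}<1+\frac{1}{s+1}<e^{1/(s+1)}$ is a clean self-contained alternative that avoids any back-reference to $g_3$; the paper's route is shorter precisely because it recycles the previous lemma. Your explicit attention to where strictness comes from at $\nu=1$ (namely from $h_2(1;d)<0$ rather than from the $h_1$ bound) is also a point the paper leaves implicit.
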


\begin{proof}
{
We now consider the case for $\nu > 1$ by substituting $h_3(\nu;d)$ from \Eq{Eq_urb_h3} for $\nu > 1$ into $g_2(\nu;d)$ for $\sqrt{h_1(\nu;d)}$ and using the results from \Lem{Lem_ub_h2} for an upper bound on $h_2(\nu;d)$. We get the bound $g_2(\nu;d) \leq g_4(\nu;d)$, where
\begin{equation}
	g_4(\nu;d) = \frac{\nu + 1 + \left[2 \sqrt{d} \nu \right]}{2} e^{-\frac{\nu^2}{2}}.
\end{equation}
We now find the value of $\nu \geq 1$ that maximizes $g_4(\nu;d)$:
\begin{align*}
	\p{g_4}{\nu} 
		&= \frac{ \left( 2 \sqrt{d} + 1 \right) - \nu \left( \left( 2 \sqrt{d} + 1 \right) \nu +1 \right)}{2} e^{-\frac{\nu^2}{2}} = 0\\
	(2 \sqrt{d} +1) \nu^2 + \nu - (2 \sqrt{d} +1)
		&= 0 \\
	\nu_4^* 
		&= \frac{ -1 + \sqrt{1 + 4 \left(2 \sqrt{d} +1\right)^2 }}{2(2 \sqrt{d} + 1)},
\end{align*}
where only the positive root was kept and it is straightforward to show that this provides the maximum for $g_4(\nu;d)$. However, we now demonstrate that this root does not satisfy the constraint $\nu \geq 1$:
\begin{equation*}
	\nu_4^* < \frac{ -1 + \left[ 1 + 2 \left(2 \sqrt{d} +1\right)\right]}{2(2 \sqrt{d} + 1)}
		= 1,
\end{equation*}
where we used the inequality $\sqrt{b_1 +b_2} < \sqrt{b_1} + \sqrt{b_2}$ for $b_1, b_2 > 0$. Since there are no roots for $\nu \geq 1$ that maximize $g_4(\nu;d)$ for $d \geq 2$, it is either maximized at $\nu = 1$ or $\nu \rightarrow \infty$. For $\lim \nu \rightarrow \infty$ we have $g_4(\nu;d) = 0$ and for $\nu = 1$ we have 
\begin{equation}
	(n_x -1 ) g_4(\nu = 1, d) = (n_x -1 ) g_3(\nu =1, d) < (n_x -1 ) g_3(\nu_3^*, d) = u_{\text{G}}(n_x,d),
\end{equation}
where $g_3 = g_4$ for $\nu = 1$ since both functions used the relation $\sqrt{h_1(\nu;d)} \leq h_3(\nu;d)$ and it was shown in \Lem{Lem_ub_h1} that $h_3(\nu)$ from \Eq{Eq_urb_h3} is $\mathbb{C}^0$ continuous. We thus have $(n_x -1 ) g_2(\nu;d) < u_{\text{G}}(n_x,d)$ for $\nu \geq 1$ and $d \geq 2$, which completes the proof.
}
\end{proof}

It has been proven that the function $g_1$ from \Eq{Eq_urb_g1}, which provides an upper bound for the sum of absolute values for the off-diagonal entries for any of the last $n_x d$ rows of $\Kgtil$, is smaller than $u_{\text{G}}(n_x,d)$ for $n_x, d \in \mathbb{Z}^+$, which completes the proof.


\subsection{Proof for \Lem{Lem_trend_etaKgtil}} \label{Sec_ApxProof_Lem_trend_etaKgtil}

We start by deriving an upper bound for the exponent for $\etaKgtil$ in \Eq{Eq_etaKgtil}. To do this we take the derivative of the exponent, which we denote as $g$, with respect to $d$:
\begin{align*}
	\p{g}{d}
		&= \frac{\sqrt{1 + 4d} - 1 - 2d}{4d^2 \sqrt{1 + 4d}} \\
		&\leq \frac{\left[1 + 2 \sqrt{d}\right] - 1 - 2d}{4d^2 \sqrt{1 + 4d}} \\
		&= \frac{ \sqrt{d}- d}{2d^2 \sqrt{1 + 4d}},
\end{align*}
which is always negative for $d \geq 1$. Therefore, $g(d)$ is monotonically decreasing with respect to $d$ for $d \in \mathbb(Z)^+$ and is thus maximized at $g(d=1) = -\frac{3 - \sqrt{5}}{4}$. An upper bound for $\etaKgtil$ from \Eq{Eq_etaKgtil} is now derived

\begin{align*}
	\etaKgtil(n_x, d) 
		&= \frac{1 + (n_x-1) \frac{1 + \sqrt{1 + 4d}}{2} e^{-\frac{1 + 2d - \sqrt{1 + 4d}}{4d}}}{\condmax - 1} \\
		&< \frac{1 + (n_x-1) \frac{1 + \left[\sqrt{1} + \sqrt{4d} \right]}{2} e^{-\frac{3 - \sqrt{5}}{4}}}{\condmax - 1} \\
		&= \frac{1 + (n_x - 1)(1 + \sqrt{d}) e^{-\frac{3 - \sqrt{5}}{4} }}{\condmax -1},
\end{align*}
where it is clear that $\etaKgtil(n_x, d) = \mathcal{O}(n_x \sqrt{d})$, which completes the proof.

\begin{acknowledgements}

The authors would like to thank the Natural Sciences and Engineering Research Council of Canada and the Ontario Graduate Scholarship Program for their financial support. 

\end{acknowledgements}

%
\section*{Conflict of interest}

The authors declare that they have no conflict of interest.

\bibliographystyle{spmpsci}      
\bibliography{MyLibrary.bib}   


\end{document}